\documentclass[11pt,twoside]{article}
\usepackage[hmargin=3cm,vmargin=1in,a4paper,centering]{geometry}
\usepackage[T1]{fontenc}
\usepackage[lining]{ebgaramond}
\usepackage{amsmath,amsthm}
\usepackage[ebgaramond]{newtxmath}
\usepackage{mleftright}
\mleftright
\usepackage[nobottomtitles]{titlesec}
\titleformat{\section}{\normalfont\large\bfseries}{\filcenter\S\thesection.}{1ex}{\filcenter}
\titleformat{\subsection}{\normalfont\bfseries}{\filcenter\S\thesubsection.}{1ex}{\filcenter}
\usepackage[colorlinks,allcolors=blue,pagebackref]{hyperref}
\usepackage{pifont}
\renewcommand*{\backrefalt}[4]{%
\ifcase #1 %
\textcolor{red}{No citations}%
\or
\ding{43}~p.~#2%
\else
\ding{43}~pp.~#2%
\fi}
\usepackage{breakcites}
\allowdisplaybreaks
\ifdefined\iflatexml
\theoremstyle{plain}
\newtheorem{theorem}{Theorem}

\newtheorem{lemma}[theorem]{Lemma}
\newtheorem{corollary}[theorem]{Corollary}
\theoremstyle{definition}

\newtheorem{remark}[theorem]{Remark}
\else
\usepackage{tcolorbox,varwidth}
\tcbuselibrary{skins, breakable, theorems}
\colorlet{thmcolor}{orange!80!white}
\colorlet{remcolor}{teal!50!white}
\colorlet{defncolor}{blue!50!black}
\usepackage{keytheorems}
\tcbset{commonstyle/.style={%
colback=white!95!tcbcolframe,
colbacktitle=white!80!tcbcolframe,
arc=2mm,
fonttitle=\bfseries,
coltitle=black,
enhanced,
varwidth boxed title*=-2cm,
attach boxed title to top left={yshift=-3mm,xshift=0.5cm,yshifttext=-1mm},
description font=\mdseries,
breakable,
before upper={\parindent15pt\noindent},
beforeafter skip balanced=10.0pt plus 1.0pt minus 1.0pt,
separator sign={\textbf{: }},
overlay first={\draw[color=tcbcolframe, line width=.5pt] (frame.south west)--(frame.south east);},
overlay middle={\draw[color=tcbcolframe, line width=.5pt] (frame.south west)--(frame.south east);\draw[color=tcbcolframe, line width=.5pt] (frame.north west)--(frame.north east);},
overlay last={\draw[color=tcbcolframe, line width=.5pt] (frame.north west)--(frame.north east);}
}
}
\newkeytheoremstyle{mythmstyle}{
bodyfont=\normalfont,
headpunct={},
notebraces={}{},
noteseparator={: },
notefont=\bfseries,
tcolorbox = {colframe=thmcolor,commonstyle}
}
\newkeytheoremstyle{myremstyle}{
bodyfont=\normalfont,
headpunct={},
notebraces={}{},
noteseparator={: },
notefont=\bfseries,
tcolorbox = {colframe=remcolor,commonstyle}
}
\newkeytheoremstyle{mydefstyle}{
bodyfont=\normalfont,
headpunct={},
notebraces={}{},
noteseparator={: },
notefont=\bfseries,
tcolorbox = {colframe=defncolor,commonstyle}
}
\newkeytheorem{theorem}[
name=Theorem,
style=mythmstyle
]
\newkeytheorem{lemma}[
name=Lemma,
style=mythmstyle,
sibling=theorem
]
\newkeytheorem{proposition}[
name=Proposition,
style=mythmstyle,
sibling=theorem
]
\newkeytheorem{corollary}[
name=Corollary,
style=mythmstyle,
sibling=theorem
]
\newkeytheorem{remark}[
name=Remark,
sibling=theorem,
style=myremstyle
]
\newkeytheorem{definition}[
name=Definition,
style=mydefstyle,
sibling=theorem
]
\fi
\newcommand{\R}{\mathbb R}
\newcommand{\C}{\mathbb C}
\newcommand{\Z}{\mathbb Z}
\newcommand{\Om}{\Omega}

\newcommand{\cP}{\mathcal P}
\newcommand{\cA}{\mathcal A}
\newcommand{\cB}{\mathcal B}
\newcommand{\cE}{\mathcal E}
\newcommand{\lamP}{\lambda^{\mathrm P}}
\newcommand{\Lcln}[2]{L^{\mathrm{cl}}_{#1,#2}}
\newcommand{\ir}{\mathrm i}
\newcommand{\dr}{\mathrm d}
\newcommand{\er}{\mathrm e}
\newcommand{\mydoi}[1]{\href{https://doi.org/#1}{doi: #1}}
\newcommand{\myarXiv}[1]{\href{https://arxiv.org/abs/#1}{arXiv: #1}}
\DeclareMathOperator{\Tr}{Tr}
\DeclareMathOperator{\curl}{curl}
\renewcommand{\Re}{\operatorname{Re}}
\renewcommand{\tilde}{\widetilde}
\renewcommand{\bar}{\overline}
\renewcommand\footnotemark{}
\begin{document}
\title{Berezin--Li--Yau inequalities\\for Aharonov--Bohm operators}
\author{Michael Levitin\thanks{%
\textbf{M. L.: }Department of Mathematics and Statistics, University of Reading,
Pepper Lane, Whiteknights, Reading RG6 6AX, UK;
\href{mailto:M.Levitin@reading.ac.uk}{\nolinkurl{M.Levitin@reading.ac.uk}}; \url{https://www.michaellevitin.net}; ORCID: 0000-0003-0020-3265%
}
\and
Iosif Polterovich\thanks{%
\textbf{I. P.: }D\'e\-par\-te\-ment de math\'ematiques et de statistique, Univer\-sit\'e de Mont\-r\'eal,
CP 6128 succ Centre-Ville, Mont\-r\'eal QC  H3C 3J7, Canada;
\href{mailto:iosif.polterovich@umontreal.ca}{\nolinkurl{iosif.polterovich@umontreal.ca}}; \url{https://www.dms.umontreal.ca/\~iossif}; ORCID: 0009-0007-0052-6589%
}
\and
Romain Speciel\thanks{%
\textbf{R. S.: }Department of Mathematics, Stanford University,
Building 380, Stanford CA 94305, USA;
\href{mailto:rspeciel@stanford.edu}{\nolinkurl{rspeciel@stanford.edu}}; \url{https://web.stanford.edu/~rspeciel/}; ORCID: 0009-0009-0032-3145%
}}

\date{\small 29 October 2026}
\maketitle

\begin{abstract}
We prove the sharp Berezin--Li--Yau inequality for  Riesz means of order at least one for Aharonov--Bohm operators on  bounded planar domains with Dirichlet boundary conditions. This confirms a conjecture posed by Frank and Hansson in 2008. We also obtain an improved lower bound for individual eigenvalues and extend it to the case of magnetic potentials with bounded magnetic field. These results are then applied to deduce estimates for Riesz means  of orders between zero and one, and for Aharonov--Bohm eigenvalues on product domains.
\end{abstract}

\begingroup\small\noindent
\textbf{MSC (2020):} Primary 35P15. Secondary 35P20, 81Q10.\\
\textbf{Keywords:} Aharonov--Bohm operator, magnetic Laplacian, Berezin--Li--Yau inequality, eigenvalue bounds.\par\endgroup

\section{Introduction and main results}\label{sec:intro}

Let $\Om\subset\R^2$ be a nonempty bounded open set with Dirichlet Laplacian $-\Delta_\Om$, whose $j$th eigenvalue we denote by $\lambda_j(-\Delta_\Om)$. The Berezin--Li--Yau inequality \cite{Berezin,LiYau} states
\begin{equation}\label{eq:BLY}
\sum_{j=1}^k\lambda_j(-\Delta_\Om)\geq\frac{2\pi}{|\Om|}k^2,
\qquad k\geq1.
\end{equation}
More generally, for a nonnegative self-adjoint operator $H$ with a discrete spectrum
\[
0\le\lambda_1=\lambda_1(H)\leq\lambda_2=\lambda_2(H)\leq\cdots,
\]
we may consider its \emph{Riesz mean} of order $\gamma\ge 0$, defined for $\Lambda\in\mathbb{R}$ by
\[
\Tr(\Lambda-H)_+^\gamma
=\sum_j(\Lambda-\lambda_j(H))_+^\gamma,
\]
where $s_+:=\max\{s,0\}$.

Note that the $\gamma=0$ case simply expresses the \emph{eigenvalue counting function}
\[
\mathcal{N}_H(\Lambda)=\#\left\{j: \lambda_j(H)<\Lambda\right\}
\]
of $H$. In the case of $H=-\Delta_\Omega$, the $\gamma=1$ case corresponds to \eqref{eq:BLY}, which is seen to be equivalent to
\[
\Tr(\Lambda+\Delta_\Omega)_+
=\sum_j(\Lambda-\lambda_j(-\Delta_\Omega))_+
\leq \frac{|\Omega|}{8\pi}\Lambda^2,
\]
as explained in \cite{LW}, \cite[Chapter~3]{FLW23}.

We study Riesz means of \emph{magnetic Laplace operators}, which we now introduce. Let $\cP\subset\Om$ be a finite set and $\cA$ be a continuously differentiable real vector potential on $\Om\setminus\cP$. Define $H_\cA=(-\ir\nabla-\cA)^2$,  the magnetic Laplace operator corresponding to $\cA$ with associated quadratic form
\begin{equation}\label{eq:form}
\mathfrak h[u]=\mathfrak h_\cA[u]:=\int_\Om|(-\ir\nabla-\cA)u|^2\,\dr x,
\qquad u\in C_c^\infty(\Om\setminus\cP).
\end{equation}
The operator $H_\cA$ is the self-adjoint operator associated with the closure of this form in $L^2(\Om)$. It is positive, has a compact resolvent, and a discrete spectrum of positive eigenvalues. Note that when $\cP=\varnothing$ and $\cA=0$, it is simply the Dirichlet Laplacian.

We say that the Riesz mean of $H_\cA$ satisfies the \emph{semiclassical bound} of order $\gamma\geq0$ if
\begin{equation}\label{eq:semiclassical}
\Tr(\Lambda-H_\cA)_+^\gamma\leq\frac{|\Om|}{4\pi(\gamma+1)}\Lambda^{\gamma+1}
\qquad\text{ for all }\Lambda\geq0.
\end{equation}
Weyl's law \cite[Theorem~A.1]{Frank09} gives
\[
\Tr(\Lambda-H_\cA)_+^\gamma = \frac{|\Om|}{4\pi(\gamma+1)}\Lambda^{\gamma+1}+o\left(\Lambda^{\gamma+1}\right)\qquad\text{as }\Lambda\to\infty, 
\]
hence the constant in \eqref{eq:semiclassical} cannot be improved. For the Dirichlet Laplacian, Berezin's theorem \cite{Berezin} asserts that \eqref{eq:semiclassical} holds for all $\gamma\geq1$. For general magnetic fields, the semiclassical bound for Riesz means of order $\gamma\geq3/2$ was obtained in \cite{LW}, while for constant fields it was proved for every order $\gamma\geq1$ in  \cite{ELV}.

An important example of the magnetic Laplace operator is the one with the \emph{Aharonov--Bohm potential}. Writing $\cA=(\cA_1,\cA_2)$, the \emph{magnetic field} is given by $\cB=\curl\cA=\partial_1\cA_2-\partial_2\cA_1$ on $\Om\setminus\cP$, and we call $\cA$ \emph{curl-free} if $\cB=0$ there. The Aharonov--Bohm potential
\[
\cA_{p,\alpha}(x)=\alpha\frac{(x-p)^\perp}{|x-p|^2},
\qquad (x_1,x_2)^\perp=(-x_2,x_1),
\]
with \emph{pole} $p\in\Om$ and \emph{flux} $\alpha\in\R$, is curl-free on $\Om\setminus\{p\}$. We denote the associated magnetic Laplace operator, called the \emph{Aharonov--Bohm operator}, by $H_{p,\alpha}$. Although its magnetic field vanishes away from $p$, for $\alpha\notin\Z$ the operator $H_{p,\alpha}$ is not gauge equivalent to the Dirichlet Laplacian, and its spectrum depends on $\alpha$.

In this paper, we investigate Riesz means of Aharonov--Bohm operators. It was proved in \cite{FH} that the order-one semiclassical bound \eqref{eq:ABorderone} for $H_{p,\alpha}$ holds, with the right-hand side multiplied by a constant $c>1$ independent of $\alpha$. The numerical evaluation therein gave $c\approx1.0224$. The authors then conjectured that the bound holds with $c=1$. We prove this conjecture as a consequence of the following theorem, which is our main result. To state it, set
\[
m_k(H)=\frac{1}{k}\sum_{j=1}^k\lambda_j(H)
\]
and suppress $H$ from the notation when it is understood in context.

\begin{theorem}\label{thm:ABmain}
Let $\Om\subset\R^2$ be a nonempty bounded open set, $\cP\subset\Om$ a finite set, and $\cA$ a real smooth curl-free potential on $\Om\setminus\cP$. Then the eigenvalues of $H_\cA$ satisfy $4m_k>\lambda_k$ and
\begin{equation}\label{eq:ABindividual}
\lambda_k\geq\frac{2m_k}{4m_k-\lambda_k}\,\frac{4\pi k}{|\Om|},
\qquad k\geq1.
\end{equation}
In particular, \eqref{eq:ABindividual} holds for the Dirichlet Aharonov--Bohm operator $H_{p,\alpha}$ for every $p\in\Om$ and every $\alpha\in\R$.
\end{theorem}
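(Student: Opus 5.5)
The plan is to run a Berezin--Li--Yau type argument in which the kinetic energy is controlled through the commuting magnetic momenta rather than the free gradient. Write $D_l=-\ir\partial_l-\cA_l$, $l=1,2$, and let $u_1,\dots,u_k$ be orthonormal eigenfunctions of $H_\cA$ for $\lambda_1,\dots,\lambda_k$. Since $\cA$ is curl-free, $[D_1,D_2]=\ir\cB=0$ on $\Om\setminus\cP$. In the quadratic form sense on the form core $C_c^\infty(\Om\setminus\cP)$ we also have $[H_\cA,x_l]=-2\ir D_l$.

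\textbf{Step 1 (bathtub quantity).} Introduce the phase-space density
\[
F(\xi)=\sum_{j\le k}\bigl|\langle u_j,(2\pi)^{-1}\er^{\ir\xi\cdot x}\rangle\bigr|^2 .
\]
By Bessel's inequality $0\le F\le |\Om|/(4\pi^2)$, and by Plancherel $\int F=k$. For $\cA=0$ one has $\int|\xi|^2F=\sum_j\lambda_j$, and the bathtub principle gives \eqref{eq:BLY}. In the magnetic case, $\int|\xi|^2F=\sum_j\|\nabla u_j\|^2$ is no longer the energy. I would therefore not compare with $\lambda_j$ directly.

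\textbf{Step 2 (gauge-covariant trial states).} I would use the trial functions $\er^{\ir\xi\cdot x}u_j$, projected orthogonally to $\mathrm{span}\{u_1,\dots,u_k\}$, in a Kröger/Harrell--Stubbe type variational argument. Expanding the form gives
\[
\mathfrak h[\er^{\ir\xi\cdot x}u]=\mathfrak h[u]+2\xi\cdot\Re\langle Du,u\rangle+|\xi|^2\|u\|^2 .
\]
Averaging over $\xi$ in a ball of radius $R$ removes the linear term. The projection error is expressed through $F$, and summing over $j$ should yield an inequality of the schematic form
\[
\sum_{j\le k}(\lambda_k-\lambda_j)\cdot(\text{weight})\;\ge\;\Bigl(\tfrac{\pi R^2|\Om|}{4\pi^2}-\int_{|\xi|<R}F\Bigr)\cdot(\dots).
\]
The factor $4m_k-\lambda_k$ should come out of this step. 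Indeed, the universal commutator identity
\[
\sum_j\|(D_l)u_j\|^2=\sum_j\lambda_j/2\quad\text{per direction (summed over $l$)}
\]
gives, together with the standard Harrell--Stubbe inequality, a bound of the form $\sum_j(\lambda-\lambda_j)^2\le 2\sum_j(\lambda-\lambda_j)\lambda_j$ when evaluated at $\lambda=\lambda_k$ and $d=2$. This reads $\lambda_k(4m_k-\lambda_k)\ge$ (positive), giving the strict inequality $4m_k>\lambda_k$ as a first output. The curl-free condition is exactly what makes the cross terms $[D_1,D_2]$ vanish in this computation. Dirichlet conditions and the finite singular set $\cP$ are handled because all identities are checked on $C_c^\infty(\Om\setminus\cP)$ and then extended by form closure.

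\textbf{Step 3 (combining with the bathtub).} I would combine the second-moment bound from Step~2 with $F\le|\Om|/(4\pi^2)$, $\int F=k$, and optimise in $R$. The optimal $R^2$ should be comparable to $\lambda_k$, and after rearrangement this gives
\[
\lambda_k(4m_k-\lambda_k)\ge 2m_k\cdot\frac{4\pi k}{|\Om|},
\]
which is \eqref{eq:ABindividual}. The Aharonov--Bohm case is immediate, since $\cA_{p,\alpha}$ is smooth and curl-free on $\Om\setminus\{p\}$.

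\textbf{Main obstacle.} I expect the hardest step to be Step~2. One must produce a second-moment inequality in which the magnetic energy enters correctly, even though plane waves are not adapted to $D$. For Aharonov--Bohm fluxes no global gauge exists, so the argument must avoid writing $\cA=\nabla\theta$. The first thing I would try is to work only with the commuting operators $D_l$ and the identities for $[H_\cA,x_l]$. I would avoid the Fourier transform of $u_j$ except through Bessel's inequality, and track exactly where $\cB=0$ is used, so that a bounded-field error term can later be added.
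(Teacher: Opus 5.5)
Your proposal has a genuine gap in the passage from Step~2 to Step~3, and it is structural, not technical. Every ingredient you actually specify holds for an \emph{arbitrary} magnetic potential:
\begin{itemize}
\item the expansion of $\mathfrak h[\er^{\ir\xi\cdot x}u]$;
\item the relations $[H_\cA,x_l]=-2\ir D_l$ and $\sum_l\|D_lu\|^2=\mathfrak h[u]$, and hence the Harrell--Stubbe inequality. Its derivation never produces $[D_1,D_2]$, so, contrary to your claim, $\cB=0$ is not used there;
\item the bounds $0\le F\le|\Om|/(4\pi^2)$ and $\int F=k$.
\end{itemize}
Yet \eqref{eq:ABindividual} is false for some constant fields. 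Together with $m_k\le\lambda_k$ it gives $\lambda_k\ge\frac23\cdot\frac{4\pi k}{|\Om|}$, while the constant-field examples of Frank--Loss--Weidl rule out any field-independent constant above $\frac12$. So the part you leave schematic would have to carry the entire hypothesis $\cB=0$, and nothing in the plan does.

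In fact the pieces do not connect. At $\lambda=\lambda_k$, Harrell--Stubbe gives $\lambda_k(4m_k-\lambda_k)\ge\frac3k\sum_{j\le k}\lambda_j^2$. This is correct and enough for $4m_k>\lambda_k$, but it contains no $|\Om|$. The bathtub bounds $\int|\xi|^2F=\sum_j\|\nabla u_j\|^2$ from below. That quantity is not gauge invariant: replacing $\cA$ by $\cA+\nabla\phi$ leaves the eigenvalues unchanged but can make it arbitrarily large compared with $\sum_j\lambda_j$. Finally, the projection coefficients of $\er^{\ir\xi\cdot x}u_j$ are $\langle\er^{\ir\xi\cdot x}u_j,u_i\rangle$. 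These are Fourier coefficients of $u_j\bar u_i$, not of $u_j$, so $F$ does not measure the projection error. Even if you could extract $m_k\ge2\pi k/|\Om|$, combining it with the Harrell--Stubbe output gives only $\lambda_k(4m_k-\lambda_k)\ge3m_k^2\ge6\pi k\,m_k/|\Om|$. That falls short of the required $8\pi k\,m_k/|\Om|$.

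The missing idea is an area bound for a \emph{reciprocal}-weighted commutator sum, which is where the paper uses $\cB=0$. Set $\rho_{ij}=\sum_r|\langle x_ru_i,u_j\rangle|^2$ and $d_{ij}=(\lambda_j-\lambda_i)\rho_{ij}$. The field-independent sum rules $\sum_jd_{ij}=2$ and $\sum_j(\lambda_i+\lambda_j)d_{ij}=8\lambda_i$ are what produce $4m_k$. Indeed, the $d_{ij}$-weighted means $\mu_\pm$ of $\lambda_i$ and $\lambda_j$ over pairs $i\le k<j$ satisfy $\mu_-+\mu_+=4m_k$ and $\mu_-\le\lambda_k\le\mu_+$.

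The decisive extra input is $\sum_ia_i\le|\Om|/\pi$, where $a_i=\sum_j(1/\lambda_i+1/\lambda_j)d_{ij}$ are the diagonal entries of $C=[M_{x_1}^*,M_{x_1}]+[M_{x_2}^*,M_{x_2}]$ in the energy space. The paper proves this in two ways:
\begin{itemize}
\item For curl-free $\cA$, $P_1-\ir P_2$ maps $\cE$ isometrically onto the orthogonal complement of $\ker(P_1+\ir P_2)=\er^FA^2(\Om)$. The Berger--Shaw theorem for multiplication by $z$ on this weighted Bergman space then bounds $\Tr C$.
\item Alternatively, $[P_1,P_2]=0$ gives \eqref{eq:flatHodge}, hence $\sum_r\|H_\cA^{-1/2}P_ru_i\|^2\le1$, i.e.\ $a_i\ge0$. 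Then \eqref{eq:cutoff} with Weyl's law yields the bound.
\end{itemize}
Jensen's inequality for $t\mapsto1/t$, together with $\mu_-\le\lambda_k\le\mu_+$, then gives $2k\bigl(1/\lambda_k+1/(4m_k-\lambda_k)\bigr)\le\sum_{i\le k}a_i\le|\Om|/\pi$. This is \eqref{eq:ABindividual}. Your intention to work only with the commuting $D_l$ and $[H_\cA,x_l]$ fits the second route. However, it requires this $H_\cA^{-1}$-weighted quantity in place of the plane-wave bathtub.
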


The case when $\cA=0$, which corresponds simply to the Dirichlet Laplacian, was recently obtained in \cite{Sp} by applying a trace identity for an auxiliary multiplication operator. Our proof of Theorem~\ref{thm:ABmain} extends this method, following \cite[Remark~6]{Sp}. We apply the Berger--Shaw theorem \cite{BS} to give a bound on the trace of a certain commutator in terms of the area of the domain, and the sum rules of \cite{LP} then yield the result.

Theorem \ref{thm:ABmain} has several immediate corollaries. First, we obtain a Berezin--Li--Yau inequality in this setting.

\begin{corollary}\label{cor:ABsum}
Under the assumptions of Theorem~\ref{thm:ABmain},
\begin{equation}\label{eq:sharpABsum}
\sum_{j=1}^k\lambda_j\geq\frac{2\pi}{|\Om|}k^2,
\qquad k\geq1.
\end{equation}
The constant in this inequality cannot be improved.
\end{corollary}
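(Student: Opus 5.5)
We derive \eqref{eq:sharpABsum} from Theorem~\ref{thm:ABmain} by induction on $k$, writing $S_k=\sum_{j=1}^k\lambda_j=km_k$ and $c=2\pi/|\Om|$, so that the claim is $S_k\ge ck^2$.

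\emph{Base case.} For $k=1$ we have $m_1=\lambda_1$, so \eqref{eq:ABindividual} reads $\lambda_1\ge \tfrac{2}{3}\cdot\tfrac{4\pi}{|\Om|}=\tfrac{4}{3}c>c$.

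\emph{Rearranging \eqref{eq:ABindividual}.} Since $4m_k-\lambda_k>0$, we may multiply through:
\[
\lambda_k(4m_k-\lambda_k)\ge 4c\,m_k .
\]
Put $x=\lambda_k$ and $m=m_k$. The function $x\mapsto x(4m-x)$ increases on $[0,2m]$ and equals $4m^2$ at $x=2m$. If $m<c$, then $4cm>4m^2\ge x(4m-x)$ for every $x$, which contradicts the inequality above. Hence $m_k\ge c$, that is, $S_k\ge ck$. This is only linear in $k$, so it is not enough by itself.

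\emph{Inductive step.} The rearranged inequality forces $\lambda_k\ge x_-$, where $x_\pm=2m\pm2\sqrt{m^2-cm}$ are the roots of $x(4m-x)=4cm$. This lower bound is valid because the constraint $x<4m$, together with the requirement $x\ge x_-$, leaves $\lambda_k\in[x_-,x_+]$. Assume $S_{k-1}\ge c(k-1)^2$ and suppose, for contradiction, that $S_k<ck^2$. Then $m<ck$, and
\[
\lambda_k=S_k-S_{k-1}<ck^2-c(k-1)^2=c(2k-1).
\]
The root $x_-$ is decreasing in $m$ for $m\ge c$, since $x_-=2cm/(m+\sqrt{m^2-cm})$ and the ratio $m/(m+\sqrt{m^2-cm})$ decreases. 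So $m<ck$ gives $x_->2ck-2c\sqrt{k^2-k}$. It therefore suffices to have $c(2k-1)\le 2ck-2c\sqrt{k^2-k}$, which is equivalent to $2\sqrt{k^2-k}\le 1$. This fails for $k\ge2$, so this route does not close the induction. This is the main obstacle: the lower root alone is too weak.

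\emph{Fix: combine the inequality with the partial sums.} We instead eliminate $\lambda_k$ directly. Write $\lambda_k=S_k-S_{k-1}$ and $S_k=km$, and define
\[
\phi(\lambda)=\lambda(4m-\lambda)-4cm .
\]
Theorem~\ref{thm:ABmain} gives $\phi(\lambda_k)\ge0$. Suppose $S_k<ck^2$. Then $\lambda_k<c(2k-1)$ as above. Moreover $\lambda_k\ge m$, because $\lambda_k$ is at least the average of $\lambda_1,\dots,\lambda_k$. Since $\phi$ is concave, $\phi(\lambda_k)\ge0$ implies $\phi\ge0$ on the segment $[m,\lambda_k]$. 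In particular, using $m\le\lambda_k$,
\[
\phi(m)=3m^2-4cm\ge0,
\]
so $m\ge\tfrac43c$. For $k\ge2$ this only gives $S_k\ge\tfrac43ck$, which is again linear in $k$, so this attempt fails as well.

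\emph{Planned route: telescoping.} The inequality I would actually aim for is $\lambda_k\ge c(2k-1)$ whenever $S_{k-1}\ge c(k-1)^2$; summing this over $k$ telescopes to $S_k\ge ck^2$. My plan is to work in the variables $(S_{k-1},\lambda_k)$. I would set $m=(S_{k-1}+\lambda_k)/k$, view \eqref{eq:ABindividual} as a quadratic constraint on $\lambda_k$ with $S_{k-1}$ as a parameter, and show the constraint is incompatible with $\lambda_k<c(2k-1)$ when $S_{k-1}\ge c(k-1)^2$. The key step, which I expect to be the main obstacle, is checking that the boundary of the admissible region behaves monotonically in $S_{k-1}$. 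If this approach stalls, I would instead minimize $S_k$ over all nondecreasing sequences satisfying \eqref{eq:ABindividual} for every index. I expect the extremal sequence to be the asymptotically Weyl one, $\lambda_j\approx 2cj$ with $m_j\approx cj$, which gives \eqref{eq:ABindividual} with equality asymptotically.

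\emph{Sharpness.} This follows from Weyl's law, which gives $\lambda_k\sim 4\pi k/|\Om|$. Hence $\sum_{j\le k}\lambda_j\sim 2\pi k^2/|\Om|$, so no constant larger than $2\pi/|\Om|$ can work in \eqref{eq:sharpABsum}.
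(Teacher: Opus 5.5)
There is a genuine gap, caused by an algebra slip at the first step. Since $\frac{4\pi k}{|\Om|}=2ck$, multiplying \eqref{eq:ABindividual} by $4m_k-\lambda_k>0$ gives
\[
\lambda_k(4m_k-\lambda_k)\ge 2m_k\cdot 2ck=4ck\,m_k,
\]
not $4c\,m_k$: you dropped the factor $k$. With the factor restored, your own second paragraph is already a complete proof, and it is exactly the paper's argument:
\[
4ck\,m_k\le\lambda_k(4m_k-\lambda_k)=4m_k^2-(\lambda_k-2m_k)^2\le 4m_k^2 .
\]
Hence $m_k\ge ck$, that is, $S_k=km_k\ge ck^2$, for each $k$ separately. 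No base case, induction or telescoping is needed.

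As written, however, the proposal does not prove the corollary. The two inductive attempts fail, as you note yourself, and the telescoping and extremal-sequence routes are only plans. Moreover, no argument based on the inequality you actually derived can succeed. Take the constant sequence $\lambda_j=2c$. It satisfies $4m_k-\lambda_k=6c>0$ and
\[
\lambda_k(4m_k-\lambda_k)=12c^2\ge 8c^2=4c\,m_k
\]
for every $k$. Yet $S_k=2ck<ck^2$ for $k\ge3$. So the linear bounds you kept obtaining were a symptom of the missing $k$, not a weakness of the method. Separately, the concavity step in your ``Fix'' paragraph is invalid. A concave function that is nonnegative at one endpoint of a segment need not be nonnegative on the whole segment. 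So $\phi(\lambda_k)\ge0$ alone does not give $\phi(m)\ge0$. Your sharpness argument via Weyl's law is fine.
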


\begin{proof}[Proof of Corollary \ref{cor:ABsum}]
Multiplying \eqref{eq:ABindividual} by its positive denominator gives
\[
\frac{8\pi k\,m_k}{|\Om|}\leq\lambda_k(4m_k-\lambda_k)
=4m_k^2-(\lambda_k-2m_k)^2\leq4m_k^2.
\]
Thus $m_k\geq2\pi k/|\Om|$, which is \eqref{eq:sharpABsum}. That the constant cannot be improved follows from Weyl's law already for $\cA=0$.
\end{proof}

As mentioned above, the eigenvalue-sum inequality \eqref{eq:sharpABsum} is equivalent to the order-one Riesz mean bound
\begin{equation}\label{eq:ABorderone}
\Tr\left(\Lambda-H_\cA\right)_+
\leq\frac{|\Om|}{8\pi}\Lambda^2,
\qquad\Lambda\geq0.
\end{equation}
Integrating this bound in $\Lambda$ yields the semiclassical bound for Riesz means of every order $\gamma\geq1$, which for $H_{p,\alpha}$ is the Frank--Hansson conjecture.

\begin{corollary}[The Frank--Hansson conjecture]\label{cor:ABriesz}
Under the assumptions of Theorem~\ref{thm:ABmain},
\[
\Tr\left(\Lambda-H_\cA\right)_+^\gamma
\leq\frac{|\Om|}{4\pi(\gamma+1)}\Lambda^{\gamma+1},
\qquad\gamma\geq1,\quad\Lambda\geq0.
\]
The constant cannot be improved.
\end{corollary}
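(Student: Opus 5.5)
The plan is to derive Corollary~\ref{cor:ABriesz} from the order-one bound \eqref{eq:ABorderone}, which is equivalent to Corollary~\ref{cor:ABsum}, by the standard Aizenman--Lieb integration argument.

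\textbf{Step 1: the order-one bound.} Fix $\Lambda\ge0$ and let $k=\mathcal N_{H_\cA}(\Lambda)$. If $k=0$ there is nothing to prove. Otherwise \eqref{eq:sharpABsum} gives
\[
\Tr(\Lambda-H_\cA)_+=k\Lambda-\sum_{j=1}^k\lambda_j\le k\Lambda-\frac{2\pi}{|\Om|}k^2 .
\]
Maximizing the right-hand side over real $k$ gives the value $|\Om|\Lambda^2/(8\pi)$, attained at $k=|\Om|\Lambda/(4\pi)$. This is \eqref{eq:ABorderone}.

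\textbf{Step 2: the identity.} For $\gamma>1$ and real $s$, the elementary identity
\[
s_+^{\gamma}=\gamma(\gamma-1)\int_0^\infty (s-t)_+\,t^{\gamma-2}\,\dr t
\]
holds. For $s>0$ the integral is a Beta integral, $s^{\gamma}B(2,\gamma-1)=s^\gamma/(\gamma(\gamma-1))$. For $s\le0$ both sides vanish.

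\textbf{Step 3: integration.} Apply the identity with $s=\Lambda-\lambda_j$, sum over $j$, and exchange sum and integral; Tonelli applies since all terms are nonnegative. This gives
\[
\Tr(\Lambda-H_\cA)_+^\gamma=\gamma(\gamma-1)\int_0^\Lambda \Tr(\Lambda-t-H_\cA)_+\,t^{\gamma-2}\,\dr t .
\]
Insert \eqref{eq:ABorderone} with $\Lambda$ replaced by $\Lambda-t$. The remaining integral is $\int_0^\Lambda(\Lambda-t)^2t^{\gamma-2}\,\dr t=\Lambda^{\gamma+1}B(3,\gamma-1)$. Multiplying by $\gamma(\gamma-1)|\Om|/(8\pi)$ yields
\[
\gamma(\gamma-1)\frac{|\Om|}{8\pi}\cdot\frac{2}{(\gamma+1)\gamma(\gamma-1)}\Lambda^{\gamma+1}=\frac{|\Om|}{4\pi(\gamma+1)}\Lambda^{\gamma+1},
\]
which is the claimed bound. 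The case $\gamma=1$ is Step 1 itself.

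\textbf{Step 4: sharpness.} Optimality of the constant follows from Weyl's law, already for $\cA=0$, exactly as stated in the introduction.

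None of these steps is a real obstacle once Theorem~\ref{thm:ABmain} is available. The only points needing care are the Beta-function constants and the justification of exchanging sum and integral, and the latter is immediate by positivity.
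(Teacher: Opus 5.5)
Your proof is correct and follows the paper's argument. Like the paper, you deduce $\gamma>1$ from the order-one bound \eqref{eq:ABorderone} via the same integral identity (the paper writes it after the substitution $t\mapsto\Lambda-t$) and get sharpness from Weyl's law with $\cA=0$; your Step~1 simply spells out the standard equivalence between \eqref{eq:sharpABsum} and \eqref{eq:ABorderone}, which the paper cites rather than proves.
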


\begin{proof}[Proof of Corollary \ref{cor:ABriesz}]
The case $\gamma=1$ is \eqref{eq:ABorderone}. For $\gamma>1$, use
\[
(\Lambda-\lambda)_+^\gamma
=\gamma(\gamma-1)\int_0^\Lambda
(\Lambda-t)^{\gamma-2}(t-\lambda)_+\,\dr t.
\]
Summing over the eigenvalues and applying \eqref{eq:ABorderone} gives
\[
\Tr\left(\Lambda-H_\cA\right)_+^\gamma
\leq\frac{|\Om|\gamma(\gamma-1)}{8\pi}
\int_0^\Lambda(\Lambda-t)^{\gamma-2}t^2\,\dr t
=\frac{|\Om|}{4\pi(\gamma+1)}\Lambda^{\gamma+1}.
\]
Optimality of the constant follows already from Weyl's law with $\cA=0$.
\end{proof}

We also deduce a P\'olya-type lower bound for individual eigenvalues.

\begin{corollary}\label{cor:ABindividual}
Under the assumptions of Theorem~\ref{thm:ABmain},
\begin{equation}\label{eq:twothirds}
\lambda_k\geq\frac23\,\frac{4\pi k}{|\Om|},
\qquad k\geq1.
\end{equation}
\end{corollary}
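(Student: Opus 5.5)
We derive \eqref{eq:twothirds} from Theorem~\ref{thm:ABmain} by treating the right-hand side of \eqref{eq:ABindividual} as a function of $m_k$ and using the elementary bounds $m_k\le\lambda_k$ and $4m_k>\lambda_k$. Write $\lambda=\lambda_k$ and $m=m_k$, and set
\[
f(m)=\frac{2m}{4m-\lambda},\qquad m\in\left(\tfrac{\lambda}{4},\lambda\right].
\]
Differentiating gives
\[
f'(m)=\frac{2(4m-\lambda)-8m}{(4m-\lambda)^2}=\frac{-2\lambda}{(4m-\lambda)^2}<0,
\]
so $f$ is decreasing on this interval. Since the eigenvalues are ordered, $m_k\le\lambda_k$, and hence
\[
f(m)\ge f(\lambda)=\frac{2\lambda}{3\lambda}=\frac23 .
\]
Substituting into \eqref{eq:ABindividual} gives $\lambda_k\ge f(m_k)\,\frac{4\pi k}{|\Om|}\ge\frac23\,\frac{4\pi k}{|\Om|}$, which is \eqref{eq:twothirds}.

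Alternatively, we can avoid calculus. Multiplying \eqref{eq:ABindividual} by its positive denominator gives
\[
\lambda(4m-\lambda)\ge 2m\,\frac{4\pi k}{|\Om|}.
\]
We aim to show $\lambda\ge\frac23\cdot\frac{4\pi k}{|\Om|}$. Suppose instead that $\frac{4\pi k}{|\Om|}>\frac32\lambda$. Then the inequality becomes $\lambda(4m-\lambda)>3m\lambda$, that is, $m>\lambda$. This contradicts $m_k\le\lambda_k$.

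None of these steps is a real obstacle. The only points that need care are that $4m_k-\lambda_k>0$, which Theorem~\ref{thm:ABmain} guarantees, and that $\lambda_k>0$, which holds because the spectrum is positive; together these justify the division and the sign of the derivative.
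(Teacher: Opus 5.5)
Your proof is correct and is essentially the paper's argument: the paper also uses $m_k\le\lambda_k$ to obtain $2m_k/(4m_k-\lambda_k)\ge 2/3$ and substitutes this into \eqref{eq:ABindividual}. Your monotonicity computation, or equivalently the cross-multiplied contradiction, simply spells out that elementary inequality, which is equivalent to $m_k\le\lambda_k$ once $4m_k-\lambda_k>0$.
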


\begin{proof}[Proof of Corollary \ref{cor:ABindividual}]
Since $m_k\leq\lambda_k$, we have $2m_k/(4m_k-\lambda_k)\geq2/3$. Substituting this into \eqref{eq:ABindividual} proves \eqref{eq:twothirds}.
\end{proof}

The individual bound \eqref{eq:twothirds} improves the bound $\lambda_k\geq 2\pi k/|\Om|$, which follows from \eqref{eq:sharpABsum} alone since $\lambda_k\geq m_k$. For the disc with the pole at its centre, P\'olya's inequality $\lambda_k\geq 4\pi k/|\Om|$ was proved in \cite{FLPS}. The question whether the coefficient $2/3$ in \eqref{eq:twothirds} can be improved for arbitrary Aharonov--Bohm configurations remains open.

\begin{remark}\label{rem:gauge}
Theorem~\ref{thm:ABmain} imposes no condition on $\cA$ near the poles beyond the choice of $C_c^\infty(\Om\setminus\cP)$ as the domain of definition for the quadratic form in \eqref{eq:form}. In particular, one may take $\cA=\sum_{p\in\cP}\cA_{p,\alpha_p}$ with arbitrary real fluxes $\alpha_p$, and one may add to any admissible $\cA$ a smooth curl-free potential on $\Om$, which need not be a gradient when $\Om$ has holes. Conversely, near each pole $p$ a curl-free potential is, up to a unitary gauge transformation $u\mapsto\er^{\ir\varphi_p}u$, the Aharonov--Bohm potential $\cA_{p,\alpha_p}$ with the flux $\alpha_p=1/(2\pi)\oint\cA$; poles with integer $\alpha_p$ are removable.
\end{remark}

In Section~\ref{sec:elementary}, we give an alternative proof of the area estimate \eqref{eq:area_bound}, a key step in the proof of Theorem~\ref{thm:ABmain}. As a consequence, we obtain the following theorem.

\begin{theorem}\label{thm:magnetic}
Let $\Om\subset\R^2$ be a nonempty bounded open set, $\cP\subset\Om$ a finite set, and $\cA$ a real $C^1$ potential on $\Om\setminus\cP$ with $\cB=\curl\cA\in L^\infty(\Om\setminus\cP)$. Write $b:=\|\cB\|_\infty$. Then the eigenvalues of $H_\cA$ satisfy $4m_k-\lambda_k+b>0$ and
\begin{equation}\label{eq:magrefined}
2k\left(\frac1{\lambda_k+b}
+\frac1{4m_k-\lambda_k+b}\right)\leq\frac{|\Om|}{\pi},
\qquad k\geq1.
\end{equation}
Consequently,
\begin{equation}\label{eq:magindividual}
\lambda_k\geq\frac13\left(\frac{4\pi k}{|\Om|}-2b+\sqrt{\left(\frac{4\pi k}{|\Om|}\right)^2-\frac{4\pi kb}{|\Om|}+b^2}\right)
\geq\frac23\,\frac{4\pi k}{|\Om|}-b,
\qquad k\geq1.
\end{equation}
\end{theorem}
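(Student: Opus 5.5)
Following \cite{Sp}, I would apply the sum rules of \cite{LP} to the multiplication operators $x_1,x_2$ (equivalently the complex coordinate $z=x_1+\ir x_2$ and its conjugate), and control the trace of a commutator by the area via an elementary estimate replacing Berger--Shaw. Let $u_1,\dots,u_k$ be orthonormal eigenfunctions of $H=H_\cA$, $P$ the projection onto their span, and $\Pi^{\pm}=-\ir\partial_j-\cA_j$ combined as $\Pi=\Pi_1+\ir\Pi_2$, $\Pi^*=\Pi_1-\ir\Pi_2$. The key commutation relations are $[\Pi_1,\Pi_2]=\ir\cB$ (as forms on $C_c^\infty(\Om\setminus\cP)$), $[H,z]=-2\Pi$ up to normalisation, and $[z,\Pi]$ a constant multiple of the identity. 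The sum rule of \cite{LP} for $G=z$ (and $G=\bar z$), truncated to $k$ eigenvalues, gives inequalities of the form
\[
\sum_{j\le k}\sum_{i>k}(\lambda_i-\lambda_j)|\langle z u_j,u_i\rangle|^2\le \sum_{j\le k}\|\Pi u_j\|^2 \text{-type terms},
\]
and the quantities $\|\Pi u_j\|^2,\|\Pi^*u_j\|^2$ differ from $\lambda_j$ by $\pm\langle\cB u_j,u_j\rangle$, hence lie in $[\lambda_j-b,\lambda_j+b]$. This is where the shifts $\lambda_k+b$ and $4m_k-\lambda_k+b$ should enter.

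Next I would prove the area estimate \eqref{eq:area_bound} elementarily: $\Tr\,P[\bar z,z]_{\text{compressed}}$, i.e.\ $\Tr\bigl(P\bar zPzP-PzP\bar zP\bigr)$ type expressions, equals $\sum_{j\le k}\bigl(\|(1-P)zu_j\|^2-\|(1-P)\bar zu_j\|^2\bigr)$, and I would bound $\sum_{j\le k}\|(1-P)zu_j\|^2\le \Tr(P|z|^2P)-\ldots$; more precisely, the needed statement is $\sum_{j\le k}\|(1-P)z u_j\|^2+\|(1-P)\bar z u_j\|^2$ is controlled, and the area enters through $\sum_{j}|\langle zu_j,u_i\rangle|^2$ bounded using $\sum_{i,j\le k}$ Hilbert--Schmidt norms and the inequality $\sum_{j\le k}\|(1-P)zu_j\|^2-\|(1-P)\bar zu_j\|^2\le |\Om|k$? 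Rather, I would aim for: $\sum_{j\le k}\bigl(\|(1-P)\bar z u_j\|^2+\|(1-P)z u_j\|^2\bigr)\ge$ something and $\le |\Om|/\pi$ times appropriate weights, obtained by comparing $PzP$ restricted to the $k$-dimensional range with a normal matrix and using that the kernel $\sum_j u_j(x)\overline{u_j(y)}$ has diagonal $\le$ bounded in $L^1$ by $k$ and pointwise estimate via Cauchy--Schwarz; the area appears from $\int_\Om |z-w|^2$ minimisation, bounding $\int_\Om|z-c|^2\rho\,dx$ with density $\rho=\sum|u_j|^2\le$ something.

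Then combine: with the ``dual'' weights from the two directions $z$ and $\bar z$, Cauchy--Schwarz on the double sum gives $k^2/(\lambda_k+b)$ and $k^2/(4m_k-\lambda_k+b)$-type lower bounds (the $4m_k-\lambda_k$ arises because $\sum_{j\le k}(\lambda_k-\lambda_j)+\dots$ combines as in \cite{Sp}: $\sum_j(2\lambda_k - \ldots)$), yielding \eqref{eq:magrefined}; positivity $4m_k-\lambda_k+b>0$ falls out of the same identity since a positive quantity is bounded by it. Finally \eqref{eq:magindividual}: set $x=\lambda_k+b$, use $m_k\le\lambda_k$ so $4m_k-\lambda_k+b\le 3\lambda_k+b$, giving $2k(1/(x)+1/(3x-2b))\le|\Om|/\pi$, a quadratic inequality in $\lambda_k$ whose larger root is the displayed expression; the weaker bound follows from $\sqrt{a^2-ab+b^2}\ge a-b/2$ with $a=4\pi k/|\Om|$. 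The main obstacle is the elementary area estimate replacing Berger--Shaw in the presence of nonzero field; I would first try to prove it via the $L^\infty$ bound on the Bergman-type density combined with the bathtub principle for $\int|z-c|^2\rho$.
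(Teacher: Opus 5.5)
\textbf{There is a genuine gap: the field-dependent area bound is never supplied, and the routes you sketch cannot supply it.}

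Your outline has the right architecture: sum rules for $x_1,x_2$, an area bound on a commutator-type trace, a Cauchy--Schwarz step, and the final quadratic. The quadratic step is fine; $\sqrt{a^2-ab+b^2}\ge a-b/2$ is valid and more than enough, though you should note that the smaller root is nonpositive. You yourself flag the area bound as the main obstacle.

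\begin{itemize}
\item The quantity you propose, $\Tr(P\bar zPzP-PzP\bar zP)=\sum_{j\le k}\bigl(\|(1-P)\bar zu_j\|^2-\|(1-P)zu_j\|^2\bigr)$, vanishes identically. It is the trace of a commutator of two operators on the $k$-dimensional range of $P$; equivalently, $\|zu_j\|=\|\bar zu_j\|$ and $\|PzP\|_{\mathrm{HS}}=\|P\bar zP\|_{\mathrm{HS}}$. Multiplication by $z$ is normal on $L^2$, so its $L^2$ self-commutator carries no area.
\item The nontrivial object is the self-commutator of $M_{x_r}$ with the adjoint taken in the energy space $\cE$. Its diagonal entries are $a_i=\sum_j(1/\lambda_i+1/\lambda_j)d_{ij}$, where $d_{ij}=(\lambda_j-\lambda_i)\sum_r|\langle x_ru_i,u_j\rangle|^2$.
\item The bathtub route has no input. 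There is no pointwise bound on $\rho=\sum_{j\le k}|u_j|^2$ in terms of $k$ and $|\Om|$. A bathtub estimate for $\int|z-c|^2\rho$ would bound a second moment from below, which is neither the quantity nor the direction the final inequality needs. Its momentum-space analogue is the Li--Yau argument, which does not apply to $(-\ir\nabla-\cA)^2$.
\item The identity $\|(P_1\pm\ir P_2)u_j\|^2=\lambda_j\mp\langle\cB u_j,u_j\rangle$ is true but is not where $b$ enters. The two shifts cancel between $z$ and $\bar z$, and the sum rules for $x_1,x_2$ are field-independent.
\end{itemize}

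\textbf{The missing idea: a diagonal positivity statement from a div--curl inequality, followed by a cutoff argument and Weyl's law.}

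\begin{itemize}
\item Expand $\|P_1w_1+P_2w_2\|^2+\|P_1w_2-P_2w_1\|^2$. The cross term is $2\Re\langle[P_2,P_1]w_1,w_2\rangle$, of modulus at most $b(\|w_1\|^2+\|w_2\|^2)$. Discarding the second norm gives $\|P_1w_1+P_2w_2\|^2\le\sum_s\tilde{\mathfrak h}[w_s]$, where $\tilde{\mathfrak h}$ is the form of $\tilde H=H_\cA+b$.
\item Testing with $w_r=\tilde H^{-1}P_ru_i$ gives $\tilde q_i=\sum_r\|\tilde H^{-1/2}P_ru_i\|^2\le1$.
\item Put $\tilde\lambda_i=\lambda_i+b$. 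The sum rules $\sum_jd_{ij}=2$ and $\sum_j(\tilde\lambda_i+\tilde\lambda_j)d_{ij}=8\tilde\lambda_i-4b$ turn this into $\tilde a_i:=\sum_j(1/\tilde\lambda_i+1/\tilde\lambda_j)d_{ij}=4(1-\tilde q_i)/\tilde\lambda_i\ge0$.
\item Multiply $\sum_j(\tilde\lambda_j-\Lambda)(1/\tilde\lambda_i+1/\tilde\lambda_j)d_{ij}=8-4b/\tilde\lambda_i-\Lambda\tilde a_i$ by $(\Lambda-\tilde\lambda_i)_+$ and sum over $i$. The pairs below $\Lambda$ cancel by antisymmetry, leaving $\Lambda\sum_i(\Lambda-\tilde\lambda_i)_+\tilde a_i\le8\Tr(\Lambda-\tilde H)_+$.
\item Weyl's law and monotone convergence then give $\sum_i\tilde a_i\le|\Om|/\pi$. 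This is where the shift by $b$ originates.
\end{itemize}

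The two reciprocals in \eqref{eq:magrefined} come not from $z$ versus $\bar z$, but from the weights $1/\tilde\lambda_i$ and $1/\tilde\lambda_j$ over the pairs $i\le k<j$. Cauchy--Schwarz gives $\sum_{i\le k}\tilde a_i\ge2k(1/\mu_-+1/\mu_+)$, with $\mu_-\le\tilde\lambda_k\le\mu_+$ and $\mu_-+\mu_+=4m_k+2b$. Hence $1/\mu_-+1/\mu_+\ge1/\tilde\lambda_k+1/(4m_k+2b-\tilde\lambda_k)$, which is \eqref{eq:magrefined}. The same step gives $4m_k-\lambda_k+b\ge\mu_->0$.
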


Note that $b$ bounds only the regular part of the field, while the fluxes at the poles are unrestricted. For $b=0$, \eqref{eq:magrefined} reduces to \eqref{eq:ABindividual} and \eqref{eq:magindividual} to \eqref{eq:twothirds}. See Remark~\ref{rem:admissible} for a discussion of the singularities of $\cA$ admitted by Theorem~\ref{thm:magnetic}.

The paper is organised as follows. Section~\ref{sec:AB} gives the Berger--Shaw proof of Theorem~\ref{thm:ABmain}. Section~\ref{sec:elementary} presents an alternative derivation of the area estimate used in that argument, which is then extended in Section~\ref{sec:magnetic} in order  to prove Theorem~\ref{thm:magnetic}. Finally, Section~\ref{sec:further} collects two further consequences of Theorem~\ref{thm:ABmain}: semiclassical and P\'olya-type bounds for Aharonov--Bohm operators on product domains $D\times G$ with $D\subset\R^2$ (Theorem~\ref{thm:products}), and bounds for Riesz means of order $0<\gamma<1$, with an application to cylinders (Theorem~\ref{thm:fractional}).

\section{The Berger--Shaw proof}\label{sec:AB}

We present here a proof of Theorem~\ref{thm:ABmain} based on a trace inequality of Berger and Shaw \cite[Theorem~1]{BS}. Throughout this section, $\Om$, $\cP$ and $\cA$ are as in Theorem~\ref{thm:ABmain}. We shall assume for now that $\Omega$ has smooth boundary and that $\mathcal A$ is smooth on a neighbourhood of $\overline\Omega$. We return to the general case by a standard argument at the end of the proof.

Put $P_r=-\ir\partial_r-\cA_r$ for $r=1,2$, so that $H_\cA$ is the Dirichlet realisation of $P_1^2+P_2^2$. The form domain of $H_\cA$ is $\cE=H_0^1(\Om;\C)$, which we equip with the energy inner product
\[
\langle u,v\rangle_{\cE}
=\mathfrak h[u,v]
=\langle P_1u,P_1v\rangle+\langle P_2u,P_2v\rangle.
\]
For $f\in C^\infty(\overline\Om;\C)$, let $M_fu=fu$ on $\cE$, with adjoint $M_f^*$ taken in this space. We identify $\R^2$ with $\C$ by $z=x_1+\ir x_2$, and define the closed subspaces
\[
V=\{v\in L^2(\Om):(P_1+\ir P_2)v=0\text{ in the sense of distributions on }\Om\},
\qquad W=V^\perp.
\]
Since $(P_1+\ir P_2)(zv)=z(P_1+\ir P_2)v$, multiplication by $z$ preserves $V$. On these spaces, we consider the bounded operators
\[
Z:V\to V,\qquad Zv=zv,
\qquad K:W\to V,\qquad Kw=\operatorname{Proj}_V(zw),
\]
with adjoints taken in the inherited $L^2$ inner products.

A bounded operator $T$ on a Hilbert space is said to be \emph{hyponormal} if $[T^*,T]\geq0$, and \emph{rationally cyclic} if there is a vector $v$ such that every vector in the space can be approximated in norm by vectors $f(T)v$, where $f$ is rational with no poles in $\operatorname{Spec}(T)$. We shall use the following special case of the Berger--Shaw theorem.

\begin{theorem}[{\cite[Theorem~1]{BS}}]\label{thm:BS}
If $T$ is a bounded rationally cyclic hyponormal operator, then $[T^*,T]$ is trace class and
\[
\Tr[T^*,T]\leq\frac{|\operatorname{Spec}(T)|}{\pi},
\]
where $|\operatorname{Spec}(T)|$ is the Lebesgue measure of the spectrum $\operatorname{Spec}(T)\subset\C$.
\end{theorem}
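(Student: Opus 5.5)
The plan is the classical route: first a quasitriangular finite-section estimate, which gives trace class together with a crude bound, and then a localisation of that bound to the spectrum through the Helton--Howe/Pincus trace formula. Throughout, hyponormality $[T^*,T]\ge0$ means it suffices to bound the traces $\Tr(P[T^*,T]P)$ uniformly along an increasing sequence of finite-rank projections $P\uparrow I$.

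\textbf{Step 1: the polynomially cyclic case.} Let $v$ be cyclic for polynomials and let $P_n$ be the orthogonal projection onto $\operatorname{span}\{v,Tv,\dots,T^{n-1}v\}$. Then $TP_n=P_{n+1}TP_n$, so $(I-P_n)TP_n$ has rank at most one. A direct computation with finite-rank operators gives
\[
\Tr\bigl(P_n[T^*,T]P_n\bigr)=\|(I-P_n)TP_n\|_2^2-\|P_nT(I-P_n)\|_2^2\le\|(I-P_n)TP_n\|^2\le\|T\|^2.
\]
Letting $n\to\infty$ and using positivity shows that $[T^*,T]$ is trace class with $\Tr[T^*,T]\le\|T\|^2$.

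\textbf{Step 2: the rationally cyclic case.} For rational cyclicity, fix finitely many poles $a_1,\dots,a_m\notin\operatorname{Spec}(T)$. Build the spaces spanned by $q(T)v$, where $q$ runs over rational functions with poles among the $a_i$ and of bounded degree. Each such space is enlarged by a finite-rank map of rank at most $m+1$ under $T$, so the same computation gives $\Tr[T^*,T]\le(m+1)\|T\|^2$. Exhausting the poles needs care here, because the rank grows with $m$. Step 2 therefore only yields trace class, not a good constant. The sharp constant comes from Step 3, and for that only trace class and a bound on the \emph{multiplicity} are needed.

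\textbf{Step 3: localisation to the spectrum.} Once $[T^*,T]$ is trace class, I would invoke the Helton--Howe theory and the Carey--Pincus principal function $g\in L^1(\C)$, supported on $\operatorname{Spec}(T)$. It satisfies
\[
\Tr[f(T)^*,h(T)]=\frac1\pi\int \overline{\partial f}\,\partial h\,g\,\dr A
\]
for $f,h$ holomorphic near the spectrum, and in particular $\Tr[T^*,T]=\frac1\pi\int g\,\dr A$. Hyponormality gives $g\ge0$. The key estimate is then $g\le1$ almost everywhere.

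To prove $g\le1$, apply the finite-section bound of Step 1 to the operators $\phi(T)$, where $\phi$ is a rational function chosen so that $\phi(T)$ is again hyponormal and cyclic. The natural choice is a Möbius map or a local conformal blow-up near a point $z_0$ of density of $\operatorname{Spec}(T)$. Comparing $\frac1\pi\int|\phi'|^2g\,\dr A\le\|\phi(T)\|^2\approx\frac1\pi\,|\phi(\text{small disc})|$ and shrinking the disc then gives $g(z_0)\le1$. Combining $0\le g\le1$ with $\operatorname{supp} g\subset\operatorname{Spec}(T)$ yields $\Tr[T^*,T]\le|\operatorname{Spec}(T)|/\pi$.

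\textbf{Main obstacle.} The hard step is $g\le1$, i.e.\ showing that the multiplicity-one bound survives localisation. Functions of a hyponormal operator need not be hyponormal, so the local blow-up argument must instead use the finite-section inequality for the compression of $[\phi(T)^*,\phi(T)]$. This is where Berger and Shaw's original argument does the real work, and I would follow their treatment there.
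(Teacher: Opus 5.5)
The paper gives no proof of this statement. It is quoted from \cite[Theorem~1]{BS}, and the one consequence the paper needs, \eqref{eq:area_bound}, is re-derived in Section~\ref{sec:elementary} without it. Judged on its own, your Step~1 is correct, but the proposal has a genuine gap at the only hard step. Everything that makes this an \emph{area} bound sits in the claim $g\le1$ a.e. For that you offer a heuristic and then defer to Berger and Shaw. Quoting $g\le1$ from the Carey--Pincus theory instead would mean quoting a statement at least as strong as the theorem.

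The heuristic cannot be repaired, for two reasons. First, $\|\phi(T)-c\|\ge\max_{\operatorname{Spec}(T)}|\phi-c|$ is a global quantity, so nothing like $\|\phi(T)\|^2\approx\frac1\pi|\phi(B)|$ for a small disc $B$ is available. Also, for rational $\phi$ of higher degree the finite-section bound carries a factor $d$, the rank of the defect, which is in general larger than one. Second, take a point with $\overline{B(z_0,\rho)}\subset\operatorname{Spec}(T)$. Writing $\phi-c=\sum_na_n(z-z_0)^n$ and using $\sum_n|a_n|^2\rho^{2n}\le\max_{\operatorname{Spec}(T)}|\phi-c|^2$ gives
\[
\frac1\pi\int_{B(z_0,\epsilon)}|\phi'|^2\,\dr A=\sum_{n\ge1}n|a_n|^2\epsilon^{2n}\le\Bigl(\frac{\epsilon}{\rho}\Bigr)^2\max_{\operatorname{Spec}(T)}|\phi-c|^2,\qquad\epsilon\le\frac{\rho}{\sqrt2}.
\]
Comparing with $\frac1\pi\int|\phi'|^2g\,\dr A\le d\,\|\phi(T)-c\|^2$ then yields at best $g(z_0)\le d(\rho/\epsilon)^2$. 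This is vacuous as $\epsilon\to0$, whether or not $\phi(T)$ is hyponormal. Off the essential spectrum you could instead use $g(\lambda)=-\operatorname{ind}(T-\lambda)\le\dim\ker(T-\lambda)^*\le1$. But at interior points of the spectrum that lie in the essential spectrum you have no mechanism at all, and nothing in your argument prevents these from having positive area. Supplying one is exactly the content of \cite{BS} that you leave out.

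Step~2 is also flawed as written. Your bound $(m+1)\|T\|^2$ grows with the number of poles. So when rational cyclicity genuinely needs poles in infinitely many components of $\C\setminus\operatorname{Spec}(T)$, it does not even give trace class. The rank count is wrong, though. From $T(T-a)^{-k}v=(T-a)^{-(k-1)}v+a(T-a)^{-k}v$, the span of the vectors $T^kv$ ($0\le k\le N$) and $(T-a_j)^{-k}v$ ($1\le j,k\le N$) is mapped by $T$ into itself plus $\C\,T^{N+1}v$. Hence the defect has rank one however many poles are used. Take $(a_j)$ dense in $\C\setminus\operatorname{Spec}(T)$; by norm continuity of the resolvent, these spaces exhaust the Hilbert space. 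Step~1 applied to $T-c$ then gives $\Tr[T^*,T]\le\min_c\|T-c\|^2$. Since $\|T-c\|$ equals the spectral radius of $T-c$ for hyponormal $T$, this is $1/\pi$ times the area of the smallest disc containing the spectrum. Passing from that disc to the spectrum itself is the whole difficulty of the theorem, and your proposal does not do it.
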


Recall that the Bergman space $A^2(\Om)$ is the closed subspace of $L^2(\Om)$ consisting of holomorphic functions.

\begin{lemma}\label{lem:area}
The operator $C=[M_{x_1}^*,M_{x_1}]+[M_{x_2}^*,M_{x_2}]$ on $\cE$ is nonnegative and trace class, with
\begin{equation}\label{eq:ABareatrace}
\Tr C\leq\frac{|\Om|}{\pi}.
\end{equation}
\end{lemma}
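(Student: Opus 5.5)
The plan is to transport the multiplication operators on $\cE$ to multiplication operators on $V$ and $W$ by means of the first-order operators $D=P_1+\ir P_2$ and $\bar D=P_1-\ir P_2$, and then to apply Theorem~\ref{thm:BS}.

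\textbf{Step 1 (splitting $C$).} Since $M_{x_r}$ are bounded on $\cE$, write $M_z=M_{x_1}+\ir M_{x_2}$ and $M_{\bar z}=M_{x_1}-\ir M_{x_2}$. Expanding the commutators, the cross terms cancel in the sum, and
\[
C=\tfrac12\left([M_z^*,M_z]+[M_{\bar z}^*,M_{\bar z}]\right).
\]
It therefore suffices to show that each of the two commutators is nonnegative, trace class, with trace at most $|\Om|/\pi$.

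\textbf{Step 2 (unitary model).} Because $\cA$ is curl-free, $[P_1,P_2]=\ir\cB=0$. Integrating by parts on $H_0^1$ then gives
\[
\|Du\|^2=\|\bar Du\|^2=\mathfrak h[u].
\]
So $U:u\mapsto\bar Du$ is an isometry $\cE\to L^2(\Om)$, and its range is closed. The adjoint of $\bar D$ on $H_0^1$ is the maximal operator $D$, so the range is $(\ker D)^\perp=W$, and $U$ is unitary onto $W$.

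Since $\partial_z\bar z=0$, we have $\bar D(\bar zu)=\bar z\,\bar Du$. Hence $UM_{\bar z}U^{-1}=T$, where $T$ is multiplication by $\bar z$ on $W$; in particular $W$ is $\bar z$-invariant, which is equivalent to $V$ being $z$-invariant. Then $T^*=P_Wz|_W$.

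A direct computation gives
\[
[T^*,T]=P_W\bar zP_Vz|_W=K^*K\geq0 .
\]
Its nonzero spectrum coincides with that of $KK^*=P_VzP_W\bar z|_V$, and one checks that $KK^*=Z^*Z-ZZ^*=[Z^*,Z]$. So $Z$ is hyponormal and
\[
\Tr[M_{\bar z}^*,M_{\bar z}]=\Tr[Z^*,Z].
\]
The symmetric construction uses $u\mapsto Du$ together with the conjugate subspace $\tilde V=\ker\bar D$, on which $\bar z$ acts. This treats $[M_z^*,M_z]$ in the same way, with $Z$ replaced by $\bar z$ on $\tilde V$.

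\textbf{Step 3 (Berger--Shaw).} First I would show $\operatorname{Spec}(Z)\subset\bar\Om$: for $\mu\notin\bar\Om$, multiplication by $(z-\mu)^{-1}$ is bounded and preserves $V$. Under the smoothness assumption on $\partial\Om$, we have $|\bar\Om|=|\Om|$. Theorem~\ref{thm:BS} then gives $\Tr[Z^*,Z]\leq|\Om|/\pi$, and likewise for the conjugate piece. Averaging the two bounds yields \eqref{eq:ABareatrace}.

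\textbf{Main obstacle: rational cyclicity of $Z$.} Near each point, $V$ consists of $\er^{\ir\varphi}$ times holomorphic (Bergman-type) functions for a local gauge $\varphi$. Globally, however, the poles and the holes of $\Om$ carry nontrivial fluxes, so $V$ is a twisted Bergman space. I would first try to produce a single cyclic vector $v_0\in V$ vanishing nowhere, and then show that $\{fv_0\}$ with $f$ rational with poles off $\bar\Om$ is dense, via Runge approximation of $g/v_0$ for $g\in V$ on exhaustions of $\Om$.

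If this fails, the fallback is the multicyclic version of Berger--Shaw. That version gives a bound of $m|\operatorname{Spec}|/\pi$, so it only suffices if one can show $m=1$, or else decompose $Z$ into cyclic pieces whose spectra have disjoint area.

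The general case ($\Om$ arbitrary, $\cA$ not smooth up to the boundary) will be recovered at the end by exhaustion, as the paper indicates.
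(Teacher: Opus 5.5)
Your Steps 1--3 match the paper's argument. Your $U=\bar D$ is the paper's $Q$, and your direct computation of $[T^*,T]=K^*K$ and $[Z^*,Z]=KK^*$ is the paper's comparison of diagonal blocks in the normality of multiplication by $z$. There is, however, a genuine gap exactly where you put your ``main obstacle''. You never establish rational cyclicity of $Z$, and without it Theorem~\ref{thm:BS} cannot be applied.

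The missing idea is that there is no global twisting at all. Under the standing assumptions the poles are excised and $\cA$ is smooth on a neighbourhood of $\overline\Om$. There you can solve the inhomogeneous Cauchy--Riemann equation $\partial_{\bar z}F=\frac{\ir}{2}(\cA_1+\ir\cA_2)$ with a \emph{complex-valued} $F$; this is always possible on a planar open set (H\"ormander). The fluxes around poles and holes only obstruct a purely imaginary $F$, i.e.\ a unitary gauge, and $\Re F$ absorbs them. For example, for $\cA_{p,\alpha}$ one may take the real function $F=-\alpha\log|z-p|$. Then $P_1+\ir P_2=-2\ir\,\er^F\partial_{\bar z}\er^{-F}$, so $V=\er^F A^2(\Om)$ with $\er^{\pm F}$ bounded. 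The nowhere-vanishing vector you were looking for is simply $v_0=\er^F$.

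Even with this $v_0$, your proposed density argument is incomplete. Runge approximation of $g/v_0$ on an exhaustion by compacta gives only locally uniform convergence. That does not control the $L^2(\Om)$ norm near $\partial\Om$, which is the norm in which $V$ must be approximated. What you need is density of rational functions with poles off $\overline\Om$ in the Bergman space $A^2(\Om)$ itself; the paper cites Hedberg for this. Since $\er^{\pm F}$ are bounded, this transfers to density of $\{f\er^F\}$ in $V$, i.e.\ $Z$ is rationally cyclic with cyclic vector $\er^F$.

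Your multicyclic fallback would not rescue the argument. It yields $m\,|\operatorname{Spec}(Z)|/\pi$, which loses the sharp constant unless $m=1$, and $m=1$ is precisely what has to be proved.
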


\begin{proof}
Set $Q=P_1-\ir P_2$. Since $\curl\cA=0$, integration by parts gives
\[
\|Qu\|^2=\|u\|_{\cE}^2=\mathfrak h[u], \qquad Q(\bar zu)=\bar z\,Qu,
\qquad u\in\cE.
\]
The first identity shows that $Q:\cE\to L^2(\Om)$ is an isometry with closed range and, since $(Q\cE)^\perp=\ker(P_1+\ir P_2)=V$, that $Q$ maps $\cE$ unitarily onto $W$. The second identity shows that multiplication by $\bar z$ preserves $W$ and that $Q$ intertwines $M_{\bar z}$ on $\cE$ with multiplication by $\bar z$ on $W$.

With $\partial_{\bar z}=(\partial_1+\ir\partial_2)/2$, choose $F$ smooth on a neighbourhood of $\overline\Om$ such that
\[
\partial_{\bar z}F=\frac{\ir}{2}(\cA_1+\ir\cA_2).
\]
Such an $F$ exists by the solvability of the inhomogeneous Cauchy--Riemann equation \cite[Theorem~1.4.4]{Hormander}. Since
\[
P_1+\ir P_2=-2\ir \er^F\partial_{\bar z}\er^{-F}
\]
and $\er^{\pm F}$ are bounded on $\Om$, the kernel $V$ of $P_1+\ir P_2$ consists of the functions $\er^Fg$ with $g$ holomorphic and square-integrable, that is,
\[
V=\er^F A^2(\Om).
\]
Relative to $L^2(\Om)=V\oplus W$, multiplication by $z$ has the block matrix
\[
\begin{pmatrix}
Z & K\\
0 & Q M_{\bar z}^*Q^{-1}
\end{pmatrix},
\]
where the lower-right block is the adjoint of multiplication by $\bar z$ on $W$. Comparing the diagonal blocks in the normality identity for multiplication by $z$ on $L^2(\Om)$ gives
\begin{equation}\label{eq:normalblocks}
[Z^*,Z]=KK^*,
\qquad Q[M_{\bar z}^*,M_{\bar z}]Q^{-1}=K^*K.
\end{equation}
In particular, both self-commutators are nonnegative, so $Z$ is hyponormal. Furthermore, since the spectrum of $Z$ lies in $\overline\Om$, and rational functions with poles off $\overline\Om$ are dense in $A^2(\Om)$ by \cite[p.~162, Corollary to Theorems~4 and~5]{Hedberg},  $Z$ is rationally cyclic with cyclic vector $\er^F$.

Theorem~\ref{thm:BS} implies that $KK^*=[Z^*,Z]$ is trace class, hence $K^*K$ is also trace class and has the same trace. Using \eqref{eq:normalblocks}, the unitarity of $Q$, and $|\overline\Om|=|\Om|$ (as $\partial\Om$ is smooth), we obtain
\[
\Tr[M_{\bar z}^*,M_{\bar z}]
=\Tr(K^*K)=\Tr(KK^*)
=\Tr[Z^*,Z]
\leq\frac{|\operatorname{Spec}(Z)|}{\pi}
\leq\frac{|\overline\Om|}{\pi}=\frac{|\Om|}{\pi}.
\]

Finally, note that complex conjugation maps the energy space for $\cA$ antiunitarily onto that for $-\cA$ and exchanges $M_z$ and $M_{\bar z}$. We may therefore apply the same argument to $-\cA$ to obtain the corresponding assertions for $[M_z^*,M_z]$, and the lemma follows since
\[
	2C=[M_{z}^*,M_{z}]+[M_{\bar z}^*,M_{\bar z}].
\]
\end{proof}

\begin{proof}[Proof of Theorem~\ref{thm:ABmain}]
Multiplication by either coordinate preserves the domain of $H_\cA$, and
\[
[H_\cA,x_r]=-2\ir P_r,
\qquad [[H_\cA,x_r],x_r]=-2I,
\qquad r=1,2.
\]
Now, let $(u_i)$ be an orthonormal eigenbasis of $H_\cA$, and set
\[
\rho_{ij}=\sum_{r=1}^2|\langle x_ru_i,u_j\rangle|^2=\rho_{ji},
\qquad
d_{ij}=(\lambda_j-\lambda_i)\rho_{ij}=-d_{ji}.
\]
Since $\|P_1u_i\|^2+\|P_2u_i\|^2=\lambda_i$, the trace identities of \cite[Theorem~2.2, (2.3) and (2.5)]{LP} give
\begin{equation}\label{eq:ABmoments}
\sum_jd_{ij}=2,
\qquad \sum_j(\lambda_i+\lambda_j)d_{ij}=8\lambda_i.
\end{equation}
The vectors $e_i=u_i/\sqrt{\lambda_i}$ form an orthonormal basis of $\cE$, in which
\[
\langle M_{x_r}e_i,e_j\rangle_{\cE}
=\sqrt{\frac{\lambda_j}{\lambda_i}}\,
\langle x_ru_i,u_j\rangle,
\qquad r=1,2.
\]
Expanding $\|M_{x_r}e_i\|_\cE^2-\|M_{x_r}^*e_i\|_\cE^2$ in this basis and summing over $r$, we obtain
\begin{equation}\label{eq:diagonal}
\langle Ce_i,e_i\rangle_{\cE}
=\sum_j\left(\frac{\lambda_j}{\lambda_i}-\frac{\lambda_i}{\lambda_j}\right)\rho_{ij}
=\sum_j\left(\frac1{\lambda_i}+\frac1{\lambda_j}\right)d_{ij}.
\end{equation}

Fix $k\geq1$. We make explicit the weighted-mean argument of \cite[Lemma~5]{Sp}, obtaining a slightly stronger conclusion. Summing \eqref{eq:ABmoments} over $i\leq k$ and using $d_{ij}=-d_{ji}$ to cancel the terms with $i,j\leq k$, we obtain
\[
\sum_{i\leq k<j}d_{ij}=2k,
\qquad
\sum_{i\leq k<j}(\lambda_i+\lambda_j)d_{ij}
=8km_k.
\]
Since $d_{ij}\geq0$ for $i\leq k<j$, the weighted means
\[
\mu_-=\frac1{2k}\sum_{i\leq k<j}\lambda_id_{ij},
\qquad
\mu_+=\frac1{2k}\sum_{i\leq k<j}\lambda_jd_{ij}
\]
satisfy
\[
\mu_-+\mu_+=4m_k,
\qquad
0<\mu_-\leq\lambda_k\leq\lambda_{k+1}\leq\mu_+.
\]
Similarly, summing \eqref{eq:diagonal} over $i\le k$ and cancelling the terms with $i,j\leq k$, then applying the Cauchy--Schwarz inequality separately to the two reciprocal sums, gives

\[
\sum_{i=1}^k\langle Ce_i,e_i\rangle_{\cE}
=
\sum_{i\leq k<j}
\left(\frac1{\lambda_i}+\frac1{\lambda_j}\right)d_{ij}
\geq
2k\left(\frac1{\mu_-}+\frac1{\mu_+}\right).
\]
Now let $\Lambda\in[\lambda_k,\lambda_{k+1}]$. Then $\mu_-\leq\Lambda\leq\mu_+$, so
\[
4m_k-\Lambda=\mu_-+\mu_+-\Lambda\geq\mu_->0
\]
and
\[
\Lambda(4m_k-\Lambda)-\mu_-\mu_+
=(\Lambda-\mu_-)(\mu_+-\Lambda)\geq0.
\]
Consequently,
\[
\frac1{\mu_-}+\frac1{\mu_+}
=\frac{4m_k}{\mu_-\mu_+}
\geq\frac{4m_k}{\Lambda(4m_k-\Lambda)}
=\frac1\Lambda+\frac1{4m_k-\Lambda}.
\]
Combining these inequalities with Lemma~\ref{lem:area} yields, for every $\Lambda\in[\lambda_k,\lambda_{k+1}]$,
\begin{equation}\label{eq:ABrefined}
2k\left(\frac1\Lambda+\frac1{4m_k-\Lambda}\right)
\leq\sum_{i=1}^k\langle Ce_i,e_i\rangle_{\cE}
\leq\Tr C\leq\frac{|\Om|}{\pi}.
\end{equation}
The inequality between the outer terms of \eqref{eq:ABrefined} can be rewritten as
\begin{equation}\label{eq:ABrefinedscalar}
\Lambda(4m_k-\Lambda)\geq\frac{8\pi k\,m_k}{|\Om|},
\qquad\Lambda\in[\lambda_k,\lambda_{k+1}].
\end{equation}

Finally, as mentioned above, we address the case of general domains that may not have a smooth boundary. Let $(\Om_n)$ be an increasing exhaustion of $\Om\setminus\cP$ by relatively compact open sets with smooth boundary, and let $\lambda_{j,n}$ be the $j$th eigenvalue of the magnetic Laplace operator on $\Om_n$ with potential $\cA$. Since every function in $C_c^\infty(\Omega\setminus\cP)$ is supported in some $\Omega_n$, the definition of $H_\cA$ and the min–max principle give $\lambda_{j,n}\downarrow\lambda_j$ for every $j$, while $|\Om_n|\to|\Om|$. For any $\Lambda\in[\lambda_k,\lambda_{k+1}]$, apply \eqref{eq:ABrefinedscalar} on $\Om_n$ with $\Lambda_n=\max\{\Lambda,\lambda_{k,n}\}$ and let $n\to\infty$. Thus \eqref{eq:ABrefinedscalar} holds on the original domain. Taking $\Lambda=\lambda_k$ gives
\[
\lambda_k(4m_k-\lambda_k)\geq\frac{8\pi k\,m_k}{|\Om|}>0.
\]
In particular $4m_k-\lambda_k>0$, and dividing by $4m_k-\lambda_k$ gives \eqref{eq:ABindividual}.
\end{proof}

\begin{remark}\label{rem:areaequality}
The bound \eqref{eq:ABareatrace} is in fact an equality. Indeed, \eqref{eq:ABrefined} and $s^{-1}+t^{-1}\geq4/(s+t)$ for $s,t>0$ give $\Tr C\geq2k/m_k$. Weyl's law gives $m_k\sim2\pi k/|\Om|$, so letting $k\to\infty$ yields $\Tr C=|\Om|/\pi$.
\end{remark}

\section{An alternative proof of the area bound}\label{sec:elementary}

We give an alternative proof of the area bound
\begin{equation}\label{eq:area_bound}
\sum_{i\leq k}\langle Ce_i,e_i\rangle_{\cE}\leq\frac{|\Om|}{\pi},
\end{equation}
which supplies the upper bound needed in \eqref{eq:ABrefined} and is all that the proof of Theorem \ref{thm:ABmain} requires. 
This proof also uses the commutator identities,  but instead of the Berger--Shaw theorem it relies on the elementary  identity \eqref{eq:flatHodge} and Weyl's law .

The notation and smoothness assumptions are those of Section~\ref{sec:AB}.

\begin{lemma}\label{lem:flatdiv}
For $w_1,w_2\in\cE$,
\begin{equation}\label{eq:flatHodge}
\|P_1w_1+P_2w_2\|^2+\|P_1w_2-P_2w_1\|^2=\sum_{r,s=1}^2\|P_rw_s\|^2 .
\end{equation}
Consequently, for an $L^2$-normalised eigenfunction $u_i$,
\begin{equation}\label{eq:flatq}
q_i:=\sum_{r=1}^2\left\|H_\cA^{-1/2}P_ru_i\right\|^2\leq1 .
\end{equation}
\end{lemma}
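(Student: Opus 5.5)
The identity \eqref{eq:flatHodge} is an integration-by-parts computation. It uses two facts. First, $P_r$ are symmetric on $\cE$ (Dirichlet condition, smooth setting of Section~\ref{sec:AB}). Second, since $\curl\cA=0$, the operators commute: $[P_1,P_2]=\ir(\partial_1\cA_2-\partial_2\cA_1)=\ir\cB=0$.

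\textbf{Step 1: expand the left-hand side.} By density it suffices to take $w_1,w_2\in C_c^\infty(\Om)$ with $\cA$ smooth near $\overline\Om$. Expanding, the left-hand side equals
\[
\sum_{r,s}\|P_rw_s\|^2+2\Re\bigl(\langle P_1w_1,P_2w_2\rangle-\langle P_1w_2,P_2w_1\rangle\bigr).
\]
\textbf{Step 2: the cross terms vanish.} Moving operators across gives
\[
\langle P_1w_1,P_2w_2\rangle=\langle P_2P_1w_1,w_2\rangle=\langle P_1P_2w_1,w_2\rangle=\langle P_2w_1,P_1w_2\rangle,
\]
whose real part equals $\Re\langle P_1w_2,P_2w_1\rangle$. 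Hence the cross terms cancel. (If one prefers to avoid second derivatives, the same computation can be done with $Q$ and $P_1+\ir P_2$ as in the proof of Lemma~\ref{lem:area}.) Closure in the $\cE$-norm then extends the identity to all of $\cE$, since both sides are continuous in $\|\cdot\|_\cE$: each $\|P_rw\|\le\|w\|_\cE$.

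\textbf{Step 3: derive \eqref{eq:flatq} by duality.} Note that $H_\cA^{-1/2}$ maps $L^2$ isometrically onto $\cE$ with the energy norm, so
\[
\|H_\cA^{-1/2}f\|=\sup\{|\langle f,\phi\rangle|:\ \phi\in\cE,\ \|H_\cA^{1/2}\phi\|=1\}.
\]
Thus
\[
q_i=\sup\Bigl|\sum_r\langle P_ru_i,\phi_r\rangle\Bigr|^2
\]
over pairs $(\phi_1,\phi_2)\in\cE^2$ with $\|\phi_1\|_\cE^2+\|\phi_2\|_\cE^2=1$. This is the squared norm of the functional $(\phi_1,\phi_2)\mapsto\langle u_i,P_1\phi_1+P_2\phi_2\rangle$ on $\cE\oplus\cE$. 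By Cauchy--Schwarz,
\[
|\langle u_i,P_1\phi_1+P_2\phi_2\rangle|\le\|u_i\|\,\|P_1\phi_1+P_2\phi_2\|,
\]
and by \eqref{eq:flatHodge},
\[
\|P_1\phi_1+P_2\phi_2\|^2\le\sum_{r,s}\|P_r\phi_s\|^2=\|\phi_1\|_\cE^2+\|\phi_2\|_\cE^2=1.
\]
With $\|u_i\|=1$ this gives $q_i\le1$.

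\textbf{Main obstacle.} The delicate point is the justification of Step 2 when $\cA$ is singular at poles, and the density/closure argument. Under the standing smoothness assumptions of Section~\ref{sec:AB}, commutation of $P_1,P_2$ on $C_c^\infty$ is immediate. The only real care is checking that $u_i\in\cE$ is not needed at all in Step 3, since $u_i$ enters only through its $L^2$ norm.
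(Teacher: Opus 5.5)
Your proof is correct and follows the paper's route: you expand the left-hand side, cancel the cross terms via $[P_1,P_2]=0$ and the symmetry of the $P_r$, extend by density, and then combine Cauchy--Schwarz with \eqref{eq:flatHodge} after discarding the second term. The one difference is cosmetic: in Step 3 you take a supremum over unit pairs $(\phi_1,\phi_2)$, whereas the paper substitutes the extremal pair $w_r=H_\cA^{-1}P_ru_i$ directly and concludes from $q_i\le\sqrt{q_i}$.
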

\begin{proof}
For $w_1,w_2\in C_c^\infty(\Om)$, expand the two squared norms on the left-hand side of \eqref{eq:flatHodge}. The cross terms are then
\[
2\Re\left(\langle P_1w_1,P_2w_2\rangle-\langle P_1w_2,P_2w_1\rangle\right).
\]

Now,
\[
\langle P_1w_1,P_2w_2\rangle=\langle P_2P_1w_1,w_2\rangle=\langle P_1P_2w_1,w_2\rangle=\langle P_2w_1,P_1w_2\rangle
\]
by $[P_1,P_2]=\ir\curl\cA=0$ and the symmetry of the $P_r$ on $C_c^\infty(\Om)$, so the cross terms cancel. Density gives \eqref{eq:flatHodge} on $\cE$.

In order to prove \eqref{eq:flatq}, let $w_r=H_\cA^{-1}P_ru_i\in\cE$, so that $\mathfrak h[w_r,\varphi]=\langle P_ru_i,\varphi\rangle$ for all $\varphi\in\cE$. Then $q_i=\sum_r\langle H_\cA^{-1}P_ru_i,P_ru_i\rangle=\sum_r\mathfrak h[w_r]$, and
\[
q_i
=\sum_r\langle P_r u_i, w_r\rangle
=\left\langle u_i,\sum_r P_r w_r\right\rangle
\leq\left\|\sum_rP_rw_r\right\|
\leq\left(\sum_{r,s}\|P_rw_s\|^2\right)^{1/2}=\sqrt{q_i}.
\]
Here the first equality holds because $\langle P_ru_i,w_r\rangle=\mathfrak h[w_r]$ is real and nonnegative, the second uses the symmetry of $P_r$ between elements of $\cE$, the first inequality is the Cauchy--Schwarz inequality with $\|u_i\|=1$, the second inequality is \eqref{eq:flatHodge} with the curl term discarded, and the last equality is $\sum_r\|P_rw_s\|^2=\mathfrak h[w_s]$. Hence $q_i\leq1$.
\end{proof}

With $(u_i)$, $e_i$, $\rho_{ij}$ and $d_{ij}$ as in the proof of Theorem~\ref{thm:ABmain}, put
\[
a_i:=\langle Ce_i,e_i\rangle_{\cE}
=\sum_j\left(\frac1{\lambda_i}+\frac1{\lambda_j}\right)d_{ij},
\]
where the second expression is \eqref{eq:diagonal}. Expanding $P_ru_i$ in the eigenbasis and using $[H_\cA,x_r]=-2\ir P_r$, that is, $\langle P_ru_i,u_j\rangle=(\ir/2)(\lambda_j-\lambda_i)\langle x_ru_i,u_j\rangle$, we find
\[
4q_i=\sum_j\frac{(\lambda_j-\lambda_i)^2}{\lambda_j}\rho_{ij}.
\]
Combining this with $\sum_jd_{ij}=2$, we obtain
\begin{equation}\label{eq:elementarypositivity}
a_i=\frac{4(1-q_i)}{\lambda_i}\geq0.
\end{equation}
This diagonal positivity is all that the eigenvalue argument requires. The identities \eqref{eq:ABmoments}--\eqref{eq:diagonal} were obtained without Lemma~\ref{lem:area} and remain available here. They imply
\[
\sum_j(\lambda_j-\Lambda)
\left(\frac1{\lambda_i}+\frac1{\lambda_j}\right)d_{ij}
=8-\Lambda a_i.
\]
Multiply by $(\Lambda-\lambda_i)_+$ and sum over $i$. On the left, the pairs $(i,j)$ with both eigenvalues below $\Lambda$ cancel by antisymmetry, and all remaining terms are nonnegative. It follows that $\sum_i(\Lambda-\lambda_i)_+(8-\Lambda a_i)\geq0$, meaning
\begin{equation}\label{eq:cutoff}
\sum_i\left(1-\frac{\lambda_i}{\Lambda}\right)_+a_i
\leq\frac8{\Lambda^2}\Tr(\Lambda-H_\cA)_+.
\end{equation}
Weyl's law gives $\Tr(\Lambda-H_\cA)_+\sim |\Om|\Lambda^2/(8\pi)$ as $\Lambda\to\infty$. Letting $\Lambda\to\infty$ in \eqref{eq:cutoff}, monotone convergence (the $a_i$ being nonnegative by \eqref{eq:elementarypositivity}) yields
\[
\sum_i a_i\leq\frac{|\Om|}{\pi}.
\]
We may therefore replace the last two inequalities in \eqref{eq:ABrefined} by
\[
\frac{|\Om|}{\pi}\geq\sum_i a_i\geq\sum_{i\leq k}a_i.
\]
The proof concludes as in Section~\ref{sec:AB}. In particular, this argument uses neither that $C$ is nonnegative nor that it is trace class.

\section{Proof of the bounded-field theorem}\label{sec:magnetic}

We retain the notation of Section~\ref{sec:AB}, with $\cA$ now real and $C^1$ on $\Om\setminus\cP$, $\cB=\curl\cA\in L^\infty(\Om\setminus\cP)$ and $b=\|\cB\|_\infty$ as in Theorem~\ref{thm:magnetic}.

\begin{proof}[Proof of Theorem~\ref{thm:magnetic}]
As in Section~\ref{sec:AB}, we may assume that $\Om$ has smooth boundary and that $\cA$ is $C^1$ on a neighbourhood of $\overline\Om$, and return to the passage to the limit at the end. For $w_1,w_2\in C_c^\infty(\Om)$, expand the two squared norms in the left-hand side of \eqref{eq:flatHodge} as in the proof of Lemma~\ref{lem:flatdiv}. The cross terms no longer cancel, but combine to $2\Re\langle[P_2,P_1]w_1,w_2\rangle$ with $[P_1,P_2]=\ir\cB$, whose absolute value is at most $2b\|w_1\|\|w_2\|\leq b(\|w_1\|^2+\|w_2\|^2)$. Discarding the curl term, we obtain
\begin{equation}\label{eq:magdiv}
\|P_1w_1+P_2w_2\|^2
\leq\sum_{r,s}\|P_rw_s\|^2+b\sum_s\|w_s\|^2,
\end{equation}
first on $C_c^\infty(\Om)$ and then, by density, on $\cE$. The right-hand side of \eqref{eq:magdiv} is $\sum_s\tilde{\mathfrak h}[w_s]$, where $\tilde{\mathfrak h}=\mathfrak h+b\|\cdot\|^2$ is the form of $\widetilde H:=H_\cA+b$. Let $(u_i)$ be an orthonormal eigenbasis of $H_\cA$, and put $\tilde\lambda_i=\lambda_i+b$. Taking $w_r=\widetilde H^{-1}P_ru_i$ in \eqref{eq:magdiv}, the argument in the proof of Lemma~\ref{lem:flatdiv} gives
\[
\tilde q_i:=\sum_r\|\widetilde H^{-1/2}P_ru_i\|^2\leq1.
\]

Define $\rho_{ij}$ and $d_{ij}$ from $(u_i)$ as in the proof of Theorem~\ref{thm:ABmain}. As before, the sum rules \eqref{eq:ABmoments} give
\[
\sum_jd_{ij}=2,\qquad
\sum_j(\tilde\lambda_i+\tilde\lambda_j)d_{ij}=8\tilde\lambda_i-4b.
\]
Therefore, as in \eqref{eq:elementarypositivity},
\[
\tilde a_i:=\sum_j\left(\frac1{\tilde\lambda_i}+\frac1{\tilde\lambda_j}\right)d_{ij}
=\frac{4(1-\tilde q_i)}{\tilde\lambda_i}\geq0.
\]
The cancellation argument leading to \eqref{eq:cutoff} now yields
\[
\Lambda\sum_i(\Lambda-\tilde\lambda_i)_+\tilde a_i
\leq\sum_i(\Lambda-\tilde\lambda_i)_+\left(8-\frac{4b}{\tilde\lambda_i}\right)
\leq8\Tr(\Lambda-\widetilde H)_+.
\]
Since Weyl's law for $\widetilde H$ is the same as for $H_\cA$, letting $\Lambda\to\infty$ gives
\begin{equation}\label{eq:magarea}
\sum_i \tilde a_i\leq\frac{|\Om|}{\pi}.
\end{equation}

Now fix $k\geq1$ and define $\mu_\pm$ as in the proof of Theorem~\ref{thm:ABmain}, with $\tilde\lambda$ in place of $\lambda$. For the pairs $i\leq k<j$, the weights $d_{ij}$ have total mass $2k$ and their $(\tilde\lambda_i+\tilde\lambda_j)$-weighted sum is $8km_k+4kb$, so that
\[
\mu_-+\mu_+=4m_k+2b,
\qquad \mu_-\leq\tilde\lambda_k\leq\mu_+.
\]
Repeating the proof of Theorem~\ref{thm:ABmain} with \eqref{eq:magarea} in place of Lemma~\ref{lem:area} and with $\Lambda=\tilde\lambda_k=\lambda_k+b$ gives \eqref{eq:magrefined}. Each denominator in \eqref{eq:magrefined} is at least $2\pi k/|\Om|$, so the inequality passes to the original domain by exhaustion, exactly as in the proof of Theorem~\ref{thm:ABmain}, since $\|\cB\|_{L^\infty(\Om_n)}\leq b$ for every subdomain. The bound on the denominators also gives $4m_k-\lambda_k+b>0$ in the limit.

Finally, since $m_k\leq\lambda_k$, we have $4m_k-\lambda_k+b\leq3\lambda_k+b$, and \eqref{eq:magrefined} implies $2k\bigl(1/(\lambda_k+b)+1/(3\lambda_k+b)\bigr)\leq|\Om|/\pi$, from which it follows that
\[
(\lambda_k+b)(3\lambda_k+b)\geq\frac{4\pi k}{|\Om|}(2\lambda_k+b).
\]
The smaller root of this quadratic in $\lambda_k$ is nonpositive; its larger root is the first expression in \eqref{eq:magindividual}. The second inequality in \eqref{eq:magindividual} follows from $\sqrt{\left(4\pi k/|\Om|\right)^2-4\pi kb/|\Om|+b^2}\geq4\pi k/|\Om|-b$.
\end{proof}

The dependence on the field in \eqref{eq:magindividual} cannot be omitted: constant-field examples \cite{FLW} show that a bound $\lambda_k\geq c\,4\pi k/|\Om|$ with $c$ independent of the field cannot hold with $c>1/2$. The first bound in \eqref{eq:magindividual} is stronger than $\lambda_k\geq2\pi k/|\Om|$ whenever $b<2\pi k(\sqrt2-1)/|\Om|$.

\begin{remark}\label{rem:admissible}
The hypotheses of Theorem~\ref{thm:magnetic} are, locally, no more general than ``an Aharonov--Bohm pole in a bounded background field.'' Indeed, on a disc about a pole $p$, solve $\Delta\psi=\cB$ and put $\cA^{\mathrm{reg}}=(-\partial_2\psi,\partial_1\psi)$. For bounded $\cB$, the potential $\cA^{\mathrm{reg}}$ is locally H\"older continuous, and $\cA-\cA^{\mathrm{reg}}$ is curl-free, hence, as in Remark~\ref{rem:gauge}, is equal to $\cA_{p,\alpha_p}$ up to a gauge transformation. Unbounded fields are not covered, even when the potential itself is bounded: for instance, $\cA=(x-p)^\perp/|x-p|$ satisfies $|\cA|=1$ but $\curl\cA=1/|x-p|$.
\end{remark}

\section{Further extensions}\label{sec:further}

We turn to the higher dimensional case. To this end, set
\[
\Lcln\gamma n:=\frac{\Gamma(\gamma+1)}{(4\pi)^{n/2}\Gamma(\gamma+1+n/2)},
\]
noting $\Lcln\gamma2=1/(4\pi(\gamma+1))$ and $\Lcln0n=\omega_n/(2\pi)^n$, where $\omega_n$ is the volume of the unit ball in $\R^n$. Higher-dimensional Aharonov--Bohm operators with singularities of codimension two have been studied, for example in \cite{FKLV}. Our results give Dirichlet spectral bounds for product configurations.

Let $D\subset\R^2$, $\cP\subset D$ and $\cA$ be as in Theorem~\ref{thm:ABmain}, and let $G\subset\R^{n-2}$, $n\geq3$, be a nonempty bounded open set. Let $H^{D\times G}_{\tilde\cA}$ be the self-adjoint operator in $L^2(D\times G)$ associated with the closure of the form
\[
u\longmapsto\int_{D\times G}
\left(|(-\ir\nabla_x-\cA(x))u|^2+|\nabla_yu|^2\right)\,\dr x\,\dr y,
\qquad u\in C_c^\infty((D\setminus\cP)\times G).
\]
This operator corresponds to the potential $\tilde\cA(x,y)=(\cA(x),0_{\R^{n-2}})$, whose singularities are contained in $\cP\times G$.

\begin{theorem}\label{thm:products}
Let $D$, $\cP$, $\cA$, $G$ and $H^{D\times G}_{\tilde\cA}$ be as above.
\begin{enumerate}
\item[(i)] The operator $H_{\tilde\cA}^{D\times G}$ satisfies the semiclassical bound
\[
\Tr\left(\Lambda-H_{\tilde\cA}^{D\times G}\right)_+^\gamma
\leq\Lcln\gamma n|D||G|\Lambda^{\gamma+n/2},
\qquad\gamma\geq1,\quad\Lambda\geq0.
\]
\item[(ii)] Suppose that $n\geq4$ and that $-\Delta_G$ satisfies P\'olya's inequality $\mathcal{N}_{-\Delta_G}(E)\leq\Lcln0{n-2}|G|E^{(n-2)/2}$ for $E\geq0$. Then $H_{\tilde\cA}^{D\times G}$ satisfies P\'olya's inequality
\[
\lambda_k\left(H_{\tilde\cA}^{D\times G}\right)\geq\lamP_k(D\times G),
\qquad
\lamP_k(D\times G):=4\pi^2\left(\frac{k}{\omega_n|D||G|}\right)^{2/n},
\qquad k\geq1.
\]
\end{enumerate}
\end{theorem}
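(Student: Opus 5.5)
The plan is separation of variables combined with the Aizenman--Lieb lifting principle. Since $\tilde\cA$ has no $y$-component and depends only on $x$, the operator decouples: $H^{D\times G}_{\tilde\cA}=H_\cA\otimes I+I\otimes(-\Delta_G)$ on $L^2(D)\otimes L^2(G)$. This holds because the form is the sum of the two forms on the algebraic tensor product $C_c^\infty(D\setminus\cP)\otimes C_c^\infty(G)$, which is a form core. Consequently the eigenvalues are exactly the sums $\lambda_j(H_\cA)+\nu_l$, where $\nu_l=\lambda_l(-\Delta_G)$.

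For part (i), I will write
\[
\Tr\left(\Lambda-H^{D\times G}_{\tilde\cA}\right)_+^\gamma=\sum_l\Tr\left((\Lambda-\nu_l)-H_\cA\right)_+^\gamma
\]
and apply Corollary~\ref{cor:ABriesz} to each term. This gives the bound
\[
\frac{|D|}{4\pi(\gamma+1)}\sum_l(\Lambda-\nu_l)_+^{\gamma+1}=\Lcln\gamma2|D|\,\Tr(\Lambda+\Delta_G)_+^{\gamma+1}.
\]
Since $\gamma+1\ge2\ge1$, Berezin's inequality for the Dirichlet Laplacian on $G\subset\R^{n-2}$ bounds the last trace by $\Lcln{\gamma+1}{n-2}|G|\Lambda^{\gamma+1+(n-2)/2}$. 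The identity $\Lcln\gamma2\Lcln{\gamma+1}{n-2}=\Lcln\gamma n$ then finishes the argument; it is a direct Gamma-function computation.

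For part (ii), I will again sum over the spectrum of $H_\cA$, this time using the counting function:
\[
\mathcal N_{H^{D\times G}}(\Lambda)=\sum_j\mathcal N_{-\Delta_G}(\Lambda-\lambda_j)\le\Lcln0{n-2}|G|\sum_j(\Lambda-\lambda_j)_+^{(n-2)/2}.
\]
Because $n\ge4$, the exponent satisfies $\gamma'=(n-2)/2\ge1$, so Corollary~\ref{cor:ABriesz} applies. It bounds the sum by $\Lcln{\gamma'}{2}|D|\Lambda^{n/2}$. Combining constants gives $\Lcln0{n-2}\Lcln{(n-2)/2}{2}=\Lcln0n=\omega_n/(2\pi)^n$, so
\[
\mathcal N(\Lambda)\le\frac{\omega_n}{(2\pi)^n}|D||G|\Lambda^{n/2}.
\]
Taking $\Lambda$ slightly larger than $\lambda_k$, so that $\mathcal N(\Lambda)\ge k$, and letting $\Lambda\downarrow\lambda_k$ yields $\lambda_k\ge4\pi^2(k/(\omega_n|D||G|))^{2/n}$.

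The main obstacle is the tensor-product decomposition, specifically verifying that the closure of the form on $C_c^\infty((D\setminus\cP)\times G)$ is the form of $H_\cA\otimes I+I\otimes(-\Delta_G)$, with no extra boundary effects along $\cP\times G$. I would handle this through the density of finite sums $\sum\phi_j(x)\psi_j(y)$ in the form norm, using a partition of unity on compact supports. The constant identities are routine Gamma computations: $\Lcln{\gamma}{2}\Lcln{\gamma+1}{n-2}=\frac{\Gamma(\gamma+1)}{4\pi\Gamma(\gamma+2)}\cdot\frac{\Gamma(\gamma+2)}{(4\pi)^{(n-2)/2}\Gamma(\gamma+1+n/2)}$, which equals $\Lcln\gamma n$.
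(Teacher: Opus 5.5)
Your proposal is correct and follows the paper's proof step for step. It uses the same tensor-product splitting, and in (i) the same summation over the spectrum of $-\Delta_G$, applying Corollary~\ref{cor:ABriesz} and then Berezin's bound at order $\gamma+1$. In (ii) it uses the same summation over the spectrum of $H_\cA$, applying the P\'olya hypothesis and then Corollary~\ref{cor:ABriesz} at order $(n-2)/2\geq1$, with the same Gamma-function identities for the constants.

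The only quibble is terminological: the dimension lifting you actually carry out is Laptev's product argument, which the paper cites. The Aizenman--Lieb principle instead concerns monotonicity in the order $\gamma$.
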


\begin{proof}
Since the form is a sum of forms in the two variables,
\[
H_{\tilde\cA}^{D\times G}
=H_\cA^D\otimes I+I\otimes(-\Delta_G),
\]
where $-\Delta_G$ is the Dirichlet Laplacian on $G$, so that the eigenvalues of $H_{\tilde\cA}^{D\times G}$ are the sums $\lambda_i^D+\nu_j$ of the eigenvalues $(\lambda_i^D)$ of $H^D_\cA$ and $(\nu_j)$ of $-\Delta_G$. The product argument \cite{Laptev97} now applies: by Corollary~\ref{cor:ABriesz} and Berezin's bound for $-\Delta_G$ at order $\gamma+1$,
\[
\begin{split}
\Tr\left(\Lambda-H_{\tilde\cA}^{D\times G}\right)_+^\gamma
&=\sum_j\Tr\left(\Lambda-\nu_j-H_\cA^D\right)_+^\gamma
\leq\frac{|D|}{4\pi(\gamma+1)}\sum_j(\Lambda-\nu_j)_+^{\gamma+1}\\
&\leq\frac{\Lcln{\gamma+1}{n-2}}{4\pi(\gamma+1)}\,|D||G|\Lambda^{\gamma+n/2}
\end{split}
\]
for $\gamma\geq1$, and $\Lcln{\gamma+1}{n-2}/(4\pi(\gamma+1))=\Lcln\gamma n$. This proves (i).

The same argument gives (ii). Under its hypothesis, Corollary~\ref{cor:ABriesz} at order $(n-2)/2\geq1$ gives
\[
\begin{split}
\mathcal{N}_{H_{\tilde\cA}^{D\times G}}(\Lambda)
&=\sum_i\mathcal{N}_{-\Delta_G}(\Lambda-\lambda_i^D)
\leq\Lcln0{n-2}|G|\Tr(\Lambda-H_\cA^D)_+^{(n-2)/2}\\
&\leq\frac{\Lcln0{n-2}}{2\pi n}\,|D||G|\Lambda^{n/2}
=\Lcln0n|D||G|\Lambda^{n/2},
\end{split}
\]
which is (ii).
\end{proof}

\begin{theorem}\label{thm:fractional}
Let $\Om$, $\cP$ and $\cA$ be as in Theorem~\ref{thm:ABmain}.
\begin{enumerate}
\item[(i)] For $0<\gamma<1$ and $\Lambda\geq0$,
\[
\Tr\left(\Lambda-H_\cA\right)_+^\gamma
\leq\frac{\kappa_\gamma|\Om|}{4\pi(\gamma+1)}\Lambda^{\gamma+1},
\qquad
\kappa_\gamma:=(\gamma+1)\max_{0<\tau<3/2}
\tau\left(\frac{3-2\tau}{4-2\tau}\right)^\gamma.
\]
\item[(ii)] In the setting of Theorem~\ref{thm:products} with $n=3$, so that $G\subset\R$ is a bounded open set,
\[
\lambda_k\left(H_{\tilde\cA}^{D\times G}\right)
\geq\kappa_{1/2}^{-2/3}\lamP_k(D\times G)
\approx0.9345\,\lamP_k(D\times G),
\qquad k\geq1.
\]
\end{enumerate}
\end{theorem}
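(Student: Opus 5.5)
For part (i), I will work from the refined inequality \eqref{eq:ABrefinedscalar} rather than from \eqref{eq:ABorderone}. By the end of the proof of Theorem~\ref{thm:ABmain} it holds on the original domain for every $\Lambda\in[\lambda_k,\lambda_{k+1}]$: $\Lambda(4m_k-\Lambda)\geq 8\pi k m_k/|\Om|$. Fix $\Lambda$ and let $k=\mathcal N(\Lambda)$, so that $\lambda_k<\Lambda\le\lambda_{k+1}$. Put $S=\sum_{i\le k}(\Lambda-\lambda_i)=k(\Lambda-m_k)$. Writing $m_k=\Lambda(1-\sigma)$ with $\sigma=S/(k\Lambda)\in[0,1)$ turns the inequality into
\[
k\le\frac{|\Om|\Lambda}{8\pi}\,\frac{3-4\sigma}{1-\sigma}.
\]
In particular $\sigma<3/4$. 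Then
\[
\Tr(\Lambda-H_\cA)^\gamma_+=\sum_{i\le k}(\Lambda-\lambda_i)^\gamma\le k^{1-\gamma}S^\gamma
\]
by concavity (Jensen/H\"older), and $k^{1-\gamma}S^\gamma=k\,\sigma^\gamma\Lambda^\gamma$. Substituting the bound on $k$ gives
\[
\Tr(\Lambda-H_\cA)^\gamma_+\le\frac{|\Om|}{8\pi}\Lambda^{\gamma+1}\,\sigma^\gamma\,\frac{3-4\sigma}{1-\sigma}.
\]

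Next I will maximise over $\sigma$ and match this with $\kappa_\gamma$. Set $\tau=\tfrac{3-4\sigma}{2(1-\sigma)}$. Then $\sigma\in[0,3/4)$ corresponds to $\tau\in(0,3/2]$, and $\sigma=\tfrac{3-2\tau}{4-2\tau}$. Thus
\[
\frac{1}{8\pi}\,\sigma^\gamma\,\frac{3-4\sigma}{1-\sigma}=\frac{1}{4\pi}\,\tau\left(\frac{3-2\tau}{4-2\tau}\right)^\gamma,
\]
and the bound becomes $\frac{\kappa_\gamma|\Om|}{4\pi(\gamma+1)}\Lambda^{\gamma+1}$, which is exactly the claimed constant. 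The endpoint $\tau=3/2$ gives $0$, so the maximum over the open interval is attained.

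The one point that needs care is the case where $\Lambda$ equals an eigenvalue. There I will pick $k$ as the largest index with $\lambda_k<\Lambda$, so that $\Lambda\le\lambda_{k+1}$ and $\Lambda\in[\lambda_k,\lambda_{k+1}]$ still holds. The case $k=0$ is trivial.

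Part (ii) follows the product argument of Theorem~\ref{thm:products}(ii). For $G\subset\R$ a bounded open set (a union of intervals), the Dirichlet Laplacian satisfies $\mathcal N_{-\Delta_G}(E)\le\Lcln01|G|E^{1/2}=|G|\sqrt E/\pi$, since on each interval the count is at most $\ell\sqrt E/\pi$. Hence
\[
\mathcal N_{H^{D\times G}}(\Lambda)\le\Lcln01|G|\Tr(\Lambda-H_\cA^D)_+^{1/2}\le\Lcln01|G|\,\frac{\kappa_{1/2}|D|}{4\pi\cdot\frac32}\,\Lambda^{3/2}=\kappa_{1/2}\Lcln03|D||G|\Lambda^{3/2},
\]
using the identity $\Lcln01/(6\pi)=\Lcln03$ as in Theorem~\ref{thm:products}. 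Inverting the counting bound, with $\Lambda$ taken just above $\lambda_k$ so that $\mathcal N\ge k$, gives
\[
\lambda_k\ge\left(\frac{k}{\kappa_{1/2}\Lcln03|D||G|}\right)^{2/3}=\kappa_{1/2}^{-2/3}\lamP_k.
\]
The numerical value $0.9345$ then comes from the one-variable maximisation defining $\kappa_{1/2}$.

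I expect the main obstacle to be bookkeeping rather than analysis: confirming that the change of variables reproduces precisely the stated $\kappa_\gamma$, including the factor $\gamma+1$, and that the parameter range $0<\tau<3/2$ corresponds to the admissible $\sigma$.
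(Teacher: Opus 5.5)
Your proof is correct and is essentially the paper's. Both apply \eqref{eq:ABrefinedscalar} at $\Lambda\in(\lambda_k,\lambda_{k+1}]$, bound the trace by $k(\Lambda-m_k)^\gamma$ via Jensen, and optimise along the same constraint curve $\sigma\le(3-2\tau)/(4-2\tau)$; you parametrise by $\sigma=1-m_k/\Lambda$ and bound $k$, whereas the paper parametrises by $\tau=4\pi k/(|\Om|\Lambda)$ and bounds $m_k$, and part (ii) is the same product argument as in the paper.
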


\begin{proof}
The bound (i) uses the freedom in the choice of $\Lambda$ in \eqref{eq:ABrefinedscalar}. For $\Lambda\leq\lambda_1$ there is nothing to prove, so let $\Lambda>\lambda_1$, choose $k$ with $\lambda_k<\Lambda\leq\lambda_{k+1}$, and put $\tau=4\pi k/(|\Om|\Lambda)$. Then \eqref{eq:ABrefinedscalar} can be rearranged as $(4\Lambda-8\pi k/|\Om|)m_k\geq\Lambda^2$, so that $m_k\geq\Lambda/(4-2\tau)$. Since $m_k<\Lambda$, this forces $\tau<3/2$. For $0<\gamma<1$, Jensen's inequality gives $\Tr(\Lambda-H_\cA)_+^\gamma\leq k(\Lambda-m_k)^\gamma$, and therefore, since $k=\tau\Lambda|\Om|/(4\pi)$, (i) follows. The maximum defining $\kappa_\gamma$ is attained at

\[
\tau_\gamma=\frac{7+\gamma-\sqrt{(7+\gamma)^2-48}}{4};
\]
for instance,
\[
\kappa_{1/2}=\sqrt{\frac{1863-297\sqrt{33}}{128}}\approx1.1070.
\]
By comparison, using only $m_k\geq2\pi k/|\Om|$, the same argument gives the constant $2/\sqrt3\approx1.1547$ in place of $\kappa_{1/2}$.

For (ii), the explicit eigenvalues of intervals give $\mathcal{N}_{-\Delta_G}(E)\leq|G|\sqrt E/\pi$, and hence
\[
\mathcal{N}_{H_{\tilde\cA}^{D\times G}}(\Lambda)
=\sum_i\mathcal{N}_{-\Delta_G}(\Lambda-\lambda_i^D)
\leq\frac{|G|}{\pi}\Tr(\Lambda-H_\cA^D)_+^{1/2}
\leq\kappa_{1/2}\frac{|D||G|}{6\pi^2}\Lambda^{3/2}.
\]
Since $\Lcln03=1/(6\pi^2)$, this yields (ii). By comparison, the factor $(3/4)^{1/3}\approx0.9086$ is obtained from \eqref{eq:sharpABsum} alone. Both bounds apply in particular to the Dirichlet Laplacian.
\end{proof}

\subsection*{AI usage statement}

The authors worked in collaboration with various AI tools after noticing that the methods described in \cite{Sp} could be applied to resolve the conjecture studied in this paper. These tools were also employed to verify and simplify proofs, search the literature, and then draft and edit parts of the manuscript. All arguments were carefully checked by the authors, who take full responsibility for the content.

\subsection*{Acknowledgements}

ML and IP have been partially supported by the AI for Math Fund, an initiative by Renaissance Philanthropy funded by XTX Markets. IP has been partially supported by NSERC and FRQNT.

\end{document}